\newtheorem{thm}[equation]{Theorem}
\newtheorem{prop}[equation]{Proposition}
\newtheorem{cor}[equation]{Corollary}
\newtheorem{lemma}[equation]{Lemma}
\theoremstyle{definition}
\newtheorem{defn}[equation]{Definition}
\theoremstyle{remark}
\newtheorem{exam}[equation]{Example}
\newtheorem{ntn}[equation]{Notation}
\newtheorem{rem}[equation]{Remark}
\newtheorem{caution}[equation]{Caution}
\renewcommand{\subsection}{\@startsection{subsection}{2}{0pt}{-3ex
plus -1ex minus -0.2ex}{-2mm plus -0pt minus
-2pt}{\normalfont\bfseries}} 
\renewcommand{\subsubsection}{\@startsection{subsubsection}{2}{0pt}{-3ex
plus -1ex minus -0.2ex}{-2mm plus -0pt minus
-2pt}{\normalfont\bfseries}} \makeatother
\numberwithin{equation}{section}
\numberwithin{equation}{subsection}
\newcommand{\erem}{\hfill$\lozenge$\end{rem}\vskip 3pt }
\newcommand{\iso}{{\;\stackrel{_\sim}{\to}\;}}
\newcommand{\onto}{\twoheadrightarrow}
\newcommand{\g}{\mathfrak{g}}
\newcommand{\mfh}{\mathfrak{h}}
\newcommand{\kk}{\mathbf{k}}
\newcommand{\Sym}{\operatorname{Sym}}
\newcommand{\gr}{\operatorname{gr}}
\newcommand{\ad}{\mathrm{ad}}
\newcommand{\QQ}{\mathbb{Q}}
\newcommand{\NN}{\mathbb{N}}
\newcommand{\FF}{\mathbb{F}}
\newcommand{\ZZ}{\mathbb{Z}}
\newcommand{\Ind}{\operatorname{Ind}}
\newcommand{\Id}{\operatorname{Id}}
\renewcommand{\SS}{\mathbb{S}}
\newcommand{\Span}{\operatorname{Span}}
\begin{document}
\frontmatter
\title{Connes-Kreimer quantizations and PBW theorems for pre-Lie algebras} 
\date{March 23, 2010}
\author{Travis Schedler} 

\begin{abstract}
  The Connes-Kreimer renormalization Hopf algebras are examples of a
  canonical quantization procedure for pre-Lie algebras. We give a
  simple construction of this quantization using the universal
  enveloping algebra for so-called twisted Lie algebras (Lie algebras
  in the category of symmetric sequences of $\kk$-modules). As an
  application, we obtain a simple proof of the (quantized) PBW
  theorem for Lie algebras which come from a pre-Lie product (over an
  arbitrary commutative ring).  More generally, we observe that the
  quantization and the PBW theorem extend to pre-Lie algebras in
  arbitrary abelian symmetric monoidal categories with limits. We also
  extend a PBW theorem of Stover for connected twisted Lie algebras to
  this categorical setting.
\end{abstract}

\begin{altabstract}
  Les alg\`ebres de Hopf de Connes-Kreimer, utlis\'ees en
  renormalisation, sont des exemples de proc\'ed\'es de quantification
  canoniques pour les alg\`ebres pr\'e-Lie. On donne une construction
  simple de cette quantification en utilisant l'alg\`ebre enveloppante
  universelle des ``alg\`ebres de Lie tordues'' (alg\`ebres de Lie
  dans la cat\'egorie des modules sym\'etriques). Comme application on
  obtient une d\'emonstration simple du th\'eor\`eme PBW (quantifi\'e)
  pour les alg\`ebres de Lie issues d'un produit pr\'e-Lie (sur un
  anneau de base commutatif quelconque). Plus g\'en\'eralement, on
  observe que la quantification et le th\'eor\`eme de PBW s'\'etendent
  aux alg\`ebres pr\'e-Lie dans n'importe quelle cat\'egorie
  symm\'etrique monoidale ab\'elienne avec limites. On \'etend aussi
  un th\'eor\`eme de Stover pour les alg\`ebres de Lie tordues
  connexes dans ce contexte cat\'egorique.
\end{altabstract}

\subjclass{17D99, 17B35}
\keywords{pre-Lie algebras, PBW theorems, renormalization, 
Hopf algebras, twisted Lie algebras, S-modules}

\maketitle

\tableofcontents
\mainmatter

\section{Introduction}
\subsection{Connes-Kreimer renormalization algebras} Connes and
Kreimer introduced \cite{CK} a renormalization Hopf algebra to
organize computations involved in certain Feynman diagram
expansions. The dual of this Hopf algebra is given by
$(\Sym \mathfrak{g}, \Delta, *)$ where $\mathfrak{g}$ is the 
vector space (or $\kk$-module for $\kk$ an arbitrary commutative ring)
freely generated by rooted trees, $\Delta$ is the standard coproduct on
$\Sym \mathfrak{g}$, and $*$ satisfies, for
$x \in \Sym^a \g, y \in \Sym^b \g$,
\begin{equation}
x * y = xy + \text{terms in degrees $a, a+1, \ldots, a+b-1$}.
\end{equation}
In particular, if $x$ and $y$ are forests of rooted trees (i.e.,
monomials in $\Sym \g$), then $x*y$ is the sum of all ways of grafting
the trees of $y$ to distinct branches of $x$ (or simply adding the
trees to the forest without grafting). We will not be concerned
further with the specific formula.

Connes and Kreimer observed in \cite{CK} that
$(\Sym \mathfrak{g}, \Delta, *) \cong U \mathfrak{g}$, the universal
enveloping algebra of $\g$ equipped with the bracket
$\{x,y\} = x * y - y * x$, as filtered Hopf algebras. Chapoton and
Livernet further noted in \cite{CL} that $\mathfrak{g}$ is not just a
Lie algebra but a (right) \emph{pre-Lie algebra} (and, in fact, a free
pre-Lie algebra).  Pre-Lie algebras generalize associative
algebras; as defined in \cite{Ge,Vi}, they
consist of a multiplication $\circ$ on $\g$ satisfying
\begin{equation}\label{plax}
  x \circ (y \circ z) - (x \circ y) \circ z = 
  x \circ (z \circ y) - (x \circ z) \circ y.
\end{equation}
As in the associative case, every pre-Lie algebra $(\g, \circ)$
as above has an associated Lie bracket,
\begin{equation}
\{x,y\} := x \circ y - y \circ x.
\end{equation}

In \cite{GuOu,GOLea}, Oudom and Guin produced from an arbitrary pre-Lie
algebra an explicit, interesting multiplication $*$ such that
$(\Sym \mathfrak{g}, \Delta, *) \cong U \mathfrak{g}$, and in
\cite{GanS}, this construction was used to prove the following theorem
(stated dually in \cite[Proposition 3.5.2]{GanS}):
\begin{thm}\cite{GanS} \label{starprodthm}
Let $\kk$ be any commutative ring.
 Star products $*$ on $\Sym \mathfrak{g}$  satisfying the conditions
\begin{enumerate}
\item[(i)] $*$ forms a bialgebra with the usual coproduct $\Delta$,
\item[(ii)] $*$ is a filtered product whose associated graded is the
usual product on $\Sym \g$,
\item[(iii)] $*$ satisfies 
\begin{equation} \label{degcond}
(\Sym^{m} \g) * \Sym \g \subseteq (\Sym^{m} \g) (\Sym \g) = \Sym^{\geq m} \g,
\end{equation}
\end{enumerate}
are equivalent to (right) pre-Lie algebra structures
$\circ: \g \otimes \g \rightarrow \g$, under the correspondence
\begin{equation} \label{deg2}
x * y = xy + x \circ y, \quad \forall x, y \in \g.
\end{equation}
\end{thm}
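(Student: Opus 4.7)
The plan is to exhibit mutually inverse constructions between the two sides. Given a star product $*$ satisfying (i)--(iii), condition (iii) applied to $m=1$ combined with (ii) forces $x*y - xy \in \g$ for $x,y\in\g$, so I set $x \circ y := x*y - xy$ and then \eqref{deg2} holds by definition. Conversely, starting from a pre-Lie product $\circ$, one obtains the requisite $*$ either by the explicit Oudom--Guin formula \cite{GuOu,GOLea} or (this paper's route) by the universal enveloping algebra construction for twisted Lie algebras developed later in the paper; both procedures yield a product which satisfies (i)--(iii) and recovers $\circ$ via \eqref{deg2}.

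The core step is deriving the pre-Lie identity from (i)--(iii). I would first pin down $(xy)*z$ for $x,y,z\in\g$ by imposing the bialgebra compatibility $\Delta(a*b) = \Delta(a)\cdot\Delta(b)$, where the right-hand product is the tensor-product multiplication on $\Sym\g\otimes\Sym\g$ induced by $*$. Writing $(xy)*z = xyz + f$ with $f \in \Sym^{\geq 2}\g$ by (iii) and $f \in \Sym^{\leq 2}\g$ by (ii), and using that $\g$ is exactly the space of primitives of $(\Sym\g,\Delta)$, the bidegree $(1,1)$ components on both sides force $f = x(y\circ z) + y(x\circ z)$. Substituting $x*y = xy + x\circ y$ together with this formula into the associativity relation $(x*y)*z = x*(y*z)$ and solving for $x*(yz)$ gives
\[
x*(yz) = xyz + y(x\circ z) + z(x\circ y) + \bigl((x\circ y)\circ z - x\circ(y\circ z)\bigr).
\]
Since $yz=zy$ in $\Sym\g$, the bracketed degree-one term must be symmetric in $y$ and $z$, which is precisely the pre-Lie axiom \eqref{plax}.

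Uniqueness of $*$ given $\circ$ then follows by an induction on total degree using the same bialgebra compatibility: every bidegree $(p,q)$ component of $a*b$ with $p,q \geq 1$ is determined by products of strictly smaller total degree, and the remaining components are pinned down by (ii), (iii), and the base case on $\g$. The main obstacle I anticipate is the bookkeeping in the bialgebra computation---tracking bidegree components of $\Delta(a)\cdot\Delta(b)$ and using the asymmetric condition (iii) correctly---and, for the existence half, checking that the Oudom--Guin (equivalently, twisted-enveloping) construction does satisfy the asymmetric condition (iii) rather than only the weaker symmetric filtered condition that would follow directly from the bialgebra axioms.
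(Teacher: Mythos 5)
Your overall strategy mirrors the paper's: extract $\circ$ from $*$ via degree considerations, and derive the pre-Lie axiom from associativity of $*$ together with the commutativity $yz=zy$. However, the specific step you use to pin down $(xy)*z$ has a genuine gap over a general commutative ring. You write that ``$\g$ is exactly the space of primitives of $(\Sym\g,\Delta)$'' and that matching bidegree $(1,1)$ components forces $f=x(y\circ z)+y(x\circ z)$. Both of these rely on the injectivity of the component $\Delta_{1,1}\colon\Sym^2\g\to\g\otimes\g$, $ab\mapsto a\otimes b+b\otimes a$, and this fails when $2$ is not invertible in $\kk$: for $\kk=\FF_2$ and $\g$ free of rank one on $x$, one has $\Delta_{1,1}(x^2)=2\,x\otimes x=0$, and $x^2$ is primitive. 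Since the theorem is asserted over an arbitrary commutative ring (the counterexamples in the appendix live precisely in characteristic $p$), this step cannot be left as stated. Your uniqueness sketch (``every bidegree $(p,q)$ component of $a*b$ with $p,q\geq 1$ is determined\dots'') silently relies on the same injectivity.

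Fortunately the detour through $(xy)*z$ is unnecessary, and once removed the rest of your argument is exactly the paper's. From associativity $(x*y)*z=x*(y*z)$ and \eqref{deg2}, project to degree $1$: condition (iii) forces $\pi_1\bigl((xy)*z\bigr)=0$ and $\pi_1\bigl((x\circ y)*z\bigr)=(x\circ y)\circ z$, while $\pi_1\bigl(x*(y\circ z)\bigr)=x\circ(y\circ z)$, so $\pi_1\bigl(x*(yz)\bigr)=(x\circ y)\circ z-x\circ(y\circ z)$. Symmetry of $yz$ then gives \eqref{plax}, with no appeal to the bialgebra axiom and no injectivity needed. This is precisely formula (1) of Proposition~\ref{expprop} specialized to $a,b,x\in\g$, which is what the paper invokes. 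For uniqueness you should likewise appeal to the explicit recursion of Proposition~\ref{expprop} rather than to bidegree extraction. For existence, deferring to the twisted-enveloping construction (Theorem~\ref{ncstarprodthm}) is appropriate; note though that the one-page argument of \S\ref{starprodthmsec} alone requires the graded PBW hypothesis on $\g$, so the full-generality statement you are proving does genuinely need the material of \S\ref{nctwgensec}.
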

Under this equivalence, the dual to the renormalization Hopf algebra,
on the nose, is obtained from the much simpler pre-Lie algebra $\g$
spanned by rooted trees. (Although this Hopf algebra is, by \cite{CK},
canonically isomorphic to $U \g$, the precise star product and the
isomorphism with $U \g$ are not defined merely by the Lie bracket on $\g$,
but require the pre-Lie structure).

\subsection{Star product formulas}\label{starprodflasec}
The difficult part of the proof of the theorem is the construction of
$*$ from an arbitrary pre-Lie algebra, which was done in \cite{GuOu,GOLea}
using complicated explicit computations, based on the following
formulas related to rooted trees:
\begin{enumerate}
\item[(0)] $a \circ 1 = a$,
\item[(1)] $a \circ (bx) = (a \circ b) \circ x - a \circ (b \circ x),
  \quad \forall a,b \in \Sym \g, x \in \g$,
\item[(2)] $(ab) \circ c = (a \circ c') (b \circ c'')$,
\item[(3)] $a * b = (a \circ b') b''$,
\end{enumerate}

To deduce a general formula for $*$, (0), (1), and (2) first extend $\circ$
to a certain binary operation on all of $\Sym \g$ (which is \emph{not}
a pre-Lie multiplication) by induction on degree, and then (3)
expresses $*$ using this. In parts (2) and (3), we use Sweedler
notation $\Delta(a) = a' \otimes a''$, which is shorthand for
$\sum_i a'_i \otimes a''_i$, for some $a'_i, a''_i \in \Sym \g$.

The original goal of this paper was to give an alternative, more
conceptual proof of Theorem \ref{starprodthm}, avoiding
complicated calculations with the above formulas.  We succeed in this
(in \S \ref{starprodthmsec}) assuming that the graded PBW theorem,
$\Sym \g \cong \gr U \g$, holds for $(\g, \{\,, \})$.  In full
generality, we later prove the theorem
as a consequence of the stronger Theorem \ref{ncstarprodthm}.

The above formulas (0)--(3) follow immediately from Theorem
\ref{starprodthm}, as we explain now:
\begin{ntn}
  Let $\pi_n: \Sym \g \rightarrow \Sym^n \g$ be the projection to
  degree $n$.
\end{ntn}
\begin{prop} \label{expprop} Let $\g$ be any $\kk$-module.  Given any
  star product $*$ on $\Sym \g$ satisfying (i), (ii), and (iii) from
  Theorem \ref{starprodthm}, define $\circ: \Sym \g \otimes \Sym \g
  \rightarrow \Sym \g$ by $g \circ h := \pi_{n}(g * h)$ for all $g \in
  \Sym^n \g, h \in \Sym \g$, extended linearly. Then, formulas
  (0)--(3) above hold.
\end{prop}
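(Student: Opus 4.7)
The idea is to exploit the degree filtration on $\Sym \g$ together with associativity of $*$ and the bialgebra identity $\Delta(x*z) = \Delta(x)*\Delta(z)$. The starting point, which I use throughout, is the decomposition forced by (ii) and (iii): for $a \in \Sym^n \g$ and $b \in \Sym^q \g$,
\[
a * b = A_0 + A_1 + \cdots + A_q, \qquad A_j \in \Sym^{n+j}\g,
\]
with $A_q = ab$ (from (ii)) and $A_0 = a \circ b$ (by the definition of $\circ$). Formula (0) is then immediate, since $1$ is necessarily the $*$-unit (the unique grouplike element of $(\Sym\g, \Delta)$): $a * 1 = a$ gives $a \circ 1 = \pi_n(a) = a$.

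Formula (1) I would deduce from associativity of $*$. Rewriting $bx = b * x - b \circ x$ (the degree-one case of the decomposition) yields $a * (bx) = (a * b) * x - a * (b \circ x)$. Projecting to $\Sym^n\g$: the second term contributes $a \circ (b \circ x)$ directly, since $b \circ x \in \Sym^{\deg b}\g$; for the first, decompose $a * b = \sum_k (a*b)_k$ homogeneously and apply the degree-one base case $(a*b)_k * x = (a*b)_k\, x + (a*b)_k \circ x$ to each piece---only the bottom piece $(a*b)_0 = a \circ b$ lands in $\Sym^n\g$, contributing $(a \circ b) \circ x$.

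For (3) and (2) I would proceed by induction on degrees, using the key auxiliary identity obtained by applying the bialgebra axiom to $b * y = by + b\circ y$ and subtracting $\Delta(by) = \sum b_{(1)}\otimes b_{(2)}y + \sum b_{(1)}y \otimes b_{(2)}$:
\[
\Delta(b \circ y) = \textstyle \sum b_{(1)} \otimes (b_{(2)} \circ y) + \sum (b_{(1)} \circ y) \otimes b_{(2)},
\]
which says $\circ y$ acts as a coderivation on $(\Sym\g, \Delta)$. For (3), reduce by linearity to $b = b' y$ and use $a * b = (a * b') * y - a * (b' \circ y)$ with the inductive hypothesis and the coderivation identity; the bookkeeping, combined with (1), reduces the inductive step to the special case of (2) with $c \in \g$. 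For (2), this special case is the derivation property $(ab) \circ y = (a \circ y) b + a (b \circ y)$; general $c$ then follows by another induction on $\deg c$, writing $c = c'y$, applying (1) to both sides, and using the coderivation identity for $c' \circ y$ together with $\Delta(c'y) = \sum c'_{(1)} \otimes c'_{(2)}y + \sum c'_{(1)}y \otimes c'_{(2)}$.

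The main obstacle is the special case of (2) with $c \in \g$ over an arbitrary base ring $\kk$. Over $\QQ$ (or $\ZZ$ for $\g$ free) this is automatic: the defect $E(a,b) := (ab)\circ y - a(b \circ y) - (a \circ y) b$ satisfies $\Delta E(a,b) = \sum a_{(1)} b_{(1)} \otimes E(a_{(2)}, b_{(2)}) + \sum E(a_{(1)}, b_{(1)}) \otimes a_{(2)} b_{(2)}$ (from the coderivation identity), so induction on $\deg a + \deg b$ forces $E$ to be primitive in $\Sym^{\ge 2}\g$ and hence zero; but over general $\kk$ primitives in $\Sym^{\ge 2}\g$ may exist, so the argument breaks down. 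A clean workaround, which the proof presumably invokes, is to pass through Theorem \ref{starprodthm}: it identifies $*$ uniquely with the Guin--Oudom star product attached to the pre-Lie structure $x \circ y := \pi_1(x * y)$ on $\g$, and that star product is defined precisely via the recursions (0)--(3).
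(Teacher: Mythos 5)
Your proof of (0) and (1) is the same as the paper's. For (2) and (3) the organization differs: the paper proves the combined statement $(ab)*c = (a\circ c')(b\circ c'')c'''$ by a single induction on $\deg c$, while you split off (3), run several nested inductions, and invoke the coderivation identity
$\Delta(b\circ y) = \sum b_{(1)}\otimes(b_{(2)}\circ y) + \sum(b_{(1)}\circ y)\otimes b_{(2)}$
as an intermediate lemma. These are not deeply different; both routes funnel to exactly the same base case, namely the derivation identity $(ab)\circ y = (a\circ y)b + a(b\circ y)$ for $y\in\g$. The paper dismisses this base case as ``obvious'' (and in a footnote claims it ``follows immediately from applying a single coproduct''), whereas you carefully isolate it and flag that over a general $\kk$ it does not follow from the coderivation identity alone.

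Your concern here is genuine and not a pedantic worry. The coderivation identity only shows that the defect $E(a,b) = (ab)\circ y - (a\circ y)b - a(b\circ y)$ is primitive once lower degrees vanish, and over rings like $\kk=\FF_p$ the coalgebra $(\Sym\g,\Delta)$ has primitives in $\Sym^{\geq 2}\g$ (e.g.\ $x^p$), so this does \emph{not} force $E=0$. Concretely, a degree-preserving coderivation of $(\Sym\g,\Delta)$ need not be a derivation: over $\FF_2$ with $\g=\FF_2 x$ the coderivation $D$ determined by $D(x)=0$, $D(x^2)=x^2$ is not a derivation. One can even realize such a $D$ as $a\mapsto a\circ x$ for a star product satisfying (i)--(iii): take $\Psi$ the filtered coalgebra automorphism of $(\FF_2[x],\Delta)$ with $\Psi(x)=x$, $\Psi(x^2)=x^2$, $\Psi(x^3)=x^3+x^2$ (and extend using coassociativity and (iii)), and set $u*v := \Psi(\Psi^{-1}(u)\Psi^{-1}(v))$; then $x*x=x^2$ while $x^2*x=x^3+x^2$, so $(x\cdot x)\circ x = x^2 \neq 0 = (x\circ x)x + x(x\circ x)$. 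So the step that both you and the paper leave unargued is not merely unobvious; as stated it fails, and the proposition requires either an additional hypothesis on $\g,\kk$ (e.g.\ that $\g$ is a free $\kk$-module, or $\QQ\subseteq\kk$, so that the primitives of $\Sym\g$ are exactly $\g$) or a stronger condition than (iii) (for instance, requiring $\Sym^m\g * \Sym^n\g \subseteq \bigoplus_{k=m}^{m+n}\Sym^k\g$ bilaterally, which does kill the example above).

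However, your proposed workaround does not repair the argument: invoking Theorem \ref{starprodthm} to identify $*$ with the Oudom--Guin star product is circular, because the paper deduces Theorem \ref{starprodthm} from Proposition \ref{expprop} --- the uniqueness of $*$ is obtained from formulas (0)--(3), and the inductive construction of $\Phi$ in \S\ref{starprodthmsec} uses formula (2) with $c\in\g$ directly. So as written neither your argument nor the paper's closes the gap; you have correctly located the missing ingredient but do not supply it.
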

The reader not interested in the following proof can safely skip it.
\begin{proof}
  (0) Since $a * 1 = a$, this follows immediately.

  (1) If $a \in \Sym^n \g$, then $a \circ (bx) = \pi_n(a * (b*x - b
  \circ x)) = \pi_n((a*b)*x - a *(b \circ x)) = (a \circ b) \circ x -
  a \circ (b \circ x)$.

(2),(3) We can combine these into the single formula
\begin{equation}
(ab) * c = (a \circ c') (b \circ c'') c'''.
\end{equation}
This identity is obvious in the case that $c \in \g$, so inductively
assume it holds for $c \in \Sym^{\leq n} \g$.  The inductive step
follows since, for $c$ and $d$ of degrees between $1$ and $n$,
\begin{multline*}
  (ab) * (c*d) = ((ab)*c)*d = ((a \circ c')(b \circ c'') c''') * d \\
  = ((a \circ c') \circ d') ((b \circ c'') \circ d'') (c''' \circ
  d''') d'''' = (a \circ (c' * d')) (b \circ (c'' * d'')) (c''' *
  d'''). \qedhere
\end{multline*}
\end{proof}

\subsection{PBW theorems} \label{pbwintsec}
Theorem \ref{starprodthm} has the following interesting corollary
(which the author did not find mentioned in the literature):
\begin{cor} \label{plpbw} 
\begin{enumerate}
\item[(i)] \textrm{(Pre-Lie graded PBW theorem) } If $\g$ is the
  associated Lie algebra of a pre-Lie algebra over an arbitrary
  commutative ring $\kk$, then $U \g$ is a filtered Hopf algebra such
  that $\Sym \g \iso \gr U \g$ by the canonical map.
\item[(ii)] \textrm{(Pre-Lie quantum PBW theorem)}\footnote{We use
    here the terminology ``quantum PBW'' since part (ii) says in
    particular that $U \g$ is a filtered quantization of $\Sym \g$
    (i.e., $U\g$ is an associative algebra such that $\gr U\g \cong
    \Sym \g$ as Poisson algebras).}  The map lifts to a
  coalgebra isomorphism $\Sym \g \iso U \g$.
\end{enumerate}
\end{cor}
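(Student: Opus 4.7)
My plan is to promote the star product produced by Theorem \ref{starprodthm} into an isomorphism of filtered bialgebras $\phi : U \g \to (\Sym \g, *)$, and then read off the two PBW statements from it.

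I would first apply Theorem \ref{starprodthm} to the pre-Lie structure $\circ$ to obtain a star product $*$ on $\Sym \g$ satisfying (i)--(iii), with $x * y = xy + x \circ y$ for $x,y \in \g$. The induced commutator on $\g$ is $x*y - y*x = x\circ y - y\circ x = \{x,y\}$, so the inclusion $\g \hookrightarrow (\Sym \g, *)$ is a Lie algebra map and the universal property of $U \g$ produces an associative algebra homomorphism $\phi : U \g \to (\Sym \g, *)$. Because $\phi(\g) \subseteq \Sym^{\leq 1} \g$ and $\phi$ is multiplicative, condition (ii) forces $\phi$ to respect the natural filtrations, so it descends to a graded algebra map $\gr \phi : \gr U \g \to \gr(\Sym \g, *) = \Sym \g$, the last equality again being (ii).

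Second, I would bring in the canonical surjective algebra map $p : \Sym \g \to \gr U \g$, obtained from the universal property of $\Sym \g$ applied to the natural map $\g \to \gr_1 U \g$ (well-defined since $\gr U \g$ is commutative, which is the only general fact about $U\g$ needed here). The composite $(\gr \phi) \circ p$ is a graded algebra endomorphism of $\Sym \g$ restricting to the identity on $\g$, hence equal to $\Id_{\Sym \g}$. Therefore $p$ is injective and thus an isomorphism, which is precisely assertion (i). Consequently $\gr \phi$ is an isomorphism, so $\phi$ is an isomorphism of filtered associative algebras.

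For (ii), I would verify that $\phi$ intertwines the coproducts. Both $\Delta_{\Sym \g} \circ \phi$ and $(\phi \otimes \phi) \circ \Delta_{U \g}$ are algebra homomorphisms $U \g \to (\Sym \g, *) \otimes (\Sym \g, *)$: multiplicativity of the first uses condition (i) of Theorem \ref{starprodthm}, saying $\Delta$ is an algebra map with respect to $*$, and the second is the usual Hopf structure on $U \g$. They agree on the generators $x \in \g$, both sending $x$ to $x \otimes 1 + 1 \otimes x$, hence they coincide on all of $U \g$. Inverting $\phi$ then yields the coalgebra isomorphism $\Sym \g \iso U \g$ required by (ii), and by construction its associated graded is $p$. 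The only delicate verification is that $p$ and $\gr \phi$ are well-defined algebra maps with $(\gr \phi) \circ p = \Id$; the deep input---existence of a star product with the prescribed leading term and compatible with $\Delta$---is entirely delegated to Theorem \ref{starprodthm}.
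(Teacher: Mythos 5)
Your proof is correct, and it is a cleaner and more self-contained deduction than what the paper makes explicit. The paper obtains Corollary \ref{plpbw} as a byproduct of its \emph{proof} of Theorem \ref{starprodthm}: in \S \ref{starprodthmsec} it builds, degree by degree, a coalgebra map $\Phi : \Sym \g \to U\g$ whose associated graded is the PBW morphism, and remarks that this ``simultaneously'' proves part (ii); part (i) then has to be extracted from the twisted machinery (Theorem \ref{ncstarprodthm} and coinvariants) that is needed to drop the a priori assumption that $\gr U\g \cong \Sym\g$. You instead treat Theorem \ref{starprodthm} as a black box and run a purely formal argument: the universal property of $U\g$ gives a filtered algebra map $\phi : U\g \to (\Sym\g,*)$, the composite $(\gr\phi)\circ p$ is a graded algebra endomorphism of $\Sym\g$ fixing $\g$ and hence the identity, so $p$ is a split monomorphism and therefore an isomorphism --- giving (i) without re-examining any of the inductive construction --- and (ii) then follows because $\Delta\circ\phi$ and $(\phi\otimes\phi)\circ\Delta$ are algebra maps out of $U\g$ agreeing on generators (using the bialgebra axiom in Theorem \ref{starprodthm}(i)). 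The advantage of your route is that it decouples the corollary from the particular construction of the star product and makes the retraction argument for (i), which the paper leaves implicit, completely explicit; the map you build is of course the inverse of the paper's $\Phi$. The only thing worth emphasizing is exactly what you already flagged: the entire weight rests on Theorem \ref{starprodthm} holding over an arbitrary $\kk$, which in the paper is ultimately supplied by Theorem \ref{ncstarprodthm}, since the proof of \S \ref{starprodthmsec} alone presupposes part (i) of the corollary and would be circular.
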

The pre-Lie assumption has a different flavor from the assumptions of
the classical PBW theorems, which impose conditions on the
$\kk$-module structure of $\g$ rather than on its Lie structure.  In
particular, if $\g$ is the associated Lie algebra of an arbitrary
associative algebra, then the PBW theorem holds for $\g$, regardless
of its $\kk$-module structure. This result already seems hard to find
in the literature.\footnote{Note that an associative algebra can be
  viewed as either a right or a left pre-Lie algebra, and the latter
  corresponds to using the opposite multiplication of $*$, which gives
  a different explicit quantum PBW isomorphism $\Sym \g \rightarrow U
  \g$.  This seems to indicate that this method of proving quantum PBW
  is not entirely natural for associative algebras. It is tempting to
  look for an abstract proof, especially since the result extends to
  associative algebras in general categorical settings (\S
  \ref{catgensec}), but we couldn't find it.}

Note that, in our one-page proof in \S \ref{starprodthmsec} of Theorem
\ref{starprodthm}, we actually assume the graded PBW theorem, which is
part (i) of the corollary above, or alternatively work under an
assumption on the $\kk$-module structure of $\g$ that implies this, as
below.  However, we later give a proof which avoids such an
assumption, using Theorem \ref{ncstarprodthm}, which relies instead on
Stover's graded PBW theorem \cite{St} in the category of $\SS$-modules
that imposes no condition on the $\kk$-module structure. This is one
motivation for the material on $\SS$-modules that form the heart of
this work: it gives a route (different from \cite{GuOu,GOLea}) to
prove the above PBW theorems even in case of $\kk$-modules for which
(i) does not necessarily hold, or is not known to hold.

The standard contexts in which the PBW theorem is known include
(cf.~\cite{HigginsPBW}):
\begin{enumerate}
\item[(1)] $\g$ is a free $\kk$-module \cite{PoincPBW, BirkhoffPBW,
    WittPBW} (or, more generally, a direct sum of cyclic
  modules);
\item[(2)] $\kk \supseteq \QQ$ \cite{CohnPBW}.
\end{enumerate}
Moreover, in these cases, the quantum PBW theorem holds, using an
explicit lift $\Sym \g \cong U \g$ of the graded PBW isomorphism: in
case (1), one may obtain a PBW coalgebra isomorphism $\Sym \g \cong U
\g$ using explicit bases for free (or cyclic) $\kk$-modules, and in
case (2) one has the symmetrization map $\Sym \g \iso U \g$ of
coalgebras, sending $x_1 \cdots x_n$ to $\frac{1}{n!} \sum_{\sigma \in
  S_n} x_{\sigma(1)} \cdots x_{\sigma(n)}$. (See \S \ref{catpbwsec}
for sketches of proofs.)  These approaches do not seem to apply to the
case of general pre-Lie algebras over an arbitrary commutative ring.
However, in these contexts, the proof of \S \ref{starprodthmsec} of
Theorem \ref{starprodthm} suffices (and one obtains a generally
different lift of the isomorphism $\Sym \g \cong \gr U \g$ to a
coalgebra isomorphism $\Sym \g \iso U \g$ than the above).
\begin{rem}\label{grpbwrem}
  There are many other cases where at least the graded PBW theorem
  holds, although it is no longer clear whether the quantum PBW
  theorem holds. For example, because the graded PBW theorem holds
  when $\kk \supseteq \QQ$, it must hold more generally when $\g$ is
  torsion-free over $\ZZ$, because one can tensor with $\QQ$.  As
  another example, if all localizations of $\g$ at prime ideals of
  $\kk$ are direct sums of cyclic modules (or torsion-free over
  $\ZZ$), or direct limits thereof, then the graded PBW theorem must
  hold.  For instance, this happens whenever $\kk$ is a Dedekind
  domain (which is a third classical case where the graded PBW theorem
  holds, attributed to \cite{LazardPBW, CartierPBW}), or whenever $\g$
  is a flat $\kk$-module.
\end{rem}
\begin{rem} In the appendix, we recall an example from \cite{CohnPBW}
  where the graded PBW isomorphism fails (and $\kk$ is an
  $\FF_p$-algebra).  Such examples for $p=2$ are even older: see, e.g.,
  \cite{SirsovPBW, CartierPBW}.
\end{rem}

\subsection{Categorical generalization} \label{catintsec} In \S
\ref{catgensec} below, we observe that the above construction makes
sense in an \emph{arbitrary} symmetric monoidal category (which is
abelian with arbitrary limits), and the star-product and (quantum) PBW
theorems therefore hold in this generality.  In particular, the
associated Lie algebra of any pre-Lie algebra (or associative algebra)
object satisfies the PBW theorem.

Moreover, to prove this theorem, we prove a categorical generalization
of Stover's graded PBW theorem, replacing $\kk$-modules by an arbitrary
symmetric monoidal category as above. Hence, connected twisted Lie
algebras are replaced by Lie algebras in the category of symmetric
sequences of objects of such a category.

Although we defer the precise explanations and definitions to that
section, it is worth pointing out a simple case where this is
nontrivial:
\begin{exam}\label{grcatpbwexam} 
  If we work in the category of $\ZZ/2$-graded modules over $\kk$
  equipped with the super braiding ($x \otimes y \mapsto (-1)^{|x|
    |y|} y \otimes x$), the resulting Lie algebras are commonly called
  Lie superalgebras.  When such a Lie superalgebra $\g$ is a free
  $\kk$-module, then the graded PBW isomorphism $\Sym \g \iso \gr U
  \g$ holds if and only if $\{x,x\} = 0$ for all even $x \in \g$ and
  $\{x, \{x, x\} \} = 0$ for all odd $x \in \g$ (cf.~Remark
  \ref{extracondrem}).  Note that these conditions do \emph{not} hold
  for all Lie superalgebras, and the second condition is not even true
  for all Lie superalgebras for which $\{x, y\}$ is the
  antisymmetrization of a binary operation. However, in a pre-Lie
  superalgebra, the pre-Lie axiom implies that $2 x \circ (x \circ x)
  = 2 (x \circ x) \circ x$ for all odd $x \in \g$, and hence $\{x,
  \{x, x\}\} = 0$.
\end{exam}

\subsection{Twisted algebras}  
To prove Theorem \ref{starprodthm} without assumptions on $\g$ and
$\kk$ such as (1) or (2) of \S \ref{pbwintsec}, we exploit a
connection between pre-Lie algebras and twisted Lie algebras, which
should be interesting in its own right.  Here, a twisted Lie algebra
(in the sense of, e.g., \cite{Bar,J})\footnote{Twisted Lie algebras
  are old in topology, and predate these references.}  is defined as a
Lie algebra object in a certain category which replaces that of
$\kk$-modules.  The category is that of \emph{$\SS$-modules}
(otherwise known as symmetric sequences of $\kk$-modules, or
species). This category is well known: for example, $\kk$-linear
operads are a different type of monoidal object in this category, and
similar categories are used to define various types of spectra in
topology.  We recall more precisely the definition of this category in
\S \ref{twalgsmodsec} below, and speak informally in this section for
the benefit of the reader not familiar with these notions.

Our main observation is that pre-Lie algebra structures on a
$\kk$-module $\g$ are equivalent to twisted Lie algebra structures on
the suspension $\Sigma \g$ (which places the $\kk$-module $\g$ in
degree one rather than zero), in a certain sense that we will explain.
Using this, Theorem \ref{starprodthm} is a ``quantization'' of a
standard type of statement (Proposition \ref{twpoissprop}) that
twisted Poisson algebra structures on the symmetric algebra
$\Sym_{\SS} \Sigma \g$ are equivalent to twisted Lie algebra
structures on $\Sigma \g$.  More precisely, ``quantizing'' this
equivalence yields a strengthened theorem (\ref{ncstarprodthm}), which
circumvents the need for the graded PBW theorem for $\g$, and proves
Theorem \ref{starprodthm} (as well as the pre-Lie PBW theorem) in full
generality. The proof uses Stover's graded PBW theorem \cite{St} valid
for all \emph{connected} twisted Lie algebras, which applies to
$\Sigma \g$ (rather than $\g$).

The equivalence between pre-Lie algebra structures on $\g$ and twisted
Lie algebra structures on $\Sigma \g$, as well as the resulting
quantization procedure, generalizes from $\kk$-modules $\g$ to
arbitrary $\SS$-modules (\S \ref{twgensubsec}) and even symmetric
monoidal categories (\S \ref{catgensec}), which implies in particular
quantum PBW theorems for pre-Lie algebras in these contexts (as
promised in \S \ref{catintsec} above).

In the case of $\SS$-modules, this requires passing to
$\SS$-bimodules. We give an alternative approach in this setting that
involves taking the suspension $\Sigma \g$ in a more careful way,
remaining in the realm of $\SS$-modules, while still implying the
analogue of Theorem \ref{starprodthm} (\S \ref{gensuspsec}).

\subsection{Outline of paper} The main contributions of this paper are the following:
\begin{enumerate}
\item To give a simple proof of Theorem \ref{starprodthm} (\S
  \ref{starprodthmsec}) using the graded PBW isomorphism (this proof
  does \emph{not} require $\SS$-modules or twisted algebras);
\item To point out the connection between pre-Lie algebras and twisted
  Lie algebras (\S \ref{pltwliesec}), and use this to prove a
  strengthening of the theorem (Theorem \ref{ncstarprodthm}) without
  any assumptions on $\g$ or $\kk$;
\item To generalize the above results and observations to the case
  where $\g$ is a twisted pre-Lie algebra, or a pre-Lie algebra in an
  arbitrary abelian symmetric monoidal category with limits (\S$\!$\S
  \ref{twgensubsec}--\ref{catgensec});
\item To sketch a simple proof of a categorical generalization of
  Stover's twisted graded PBW theorem as well as the usual graded PBW
  theorems in a unified context (\S \ref{catpbwsec}).
\end{enumerate}
In the appendix, we recall the PBW counterexamples from \cite{CohnPBW}
and remark that a pre-Lie identity \cite{Toudlco} which generalizes a
classical $p$-th power identity of Zassenhaus explains why they do not
extend to the pre-Lie setting (in accordance with Corollary
\ref{plpbw}.(i)).

\subsection{Acknowledgements}
This work grew out of an attempt to understand and improve
\cite{GuOu,GOLea}.  I am grateful to M. Livernet for useful
discussions, as well as pointing out the main references including
\emph{op. cit}, and for many helpful corrections and suggestions.  I
am grateful to M. Van den Bergh for useful discussions, and to my
Ph.D. advisor, V.  Ginzburg, for his guidance.  I also thank M. Ronco
for answering questions about \cite{Ron}, and J.-M. Oudom for
answering questions about \cite{GuOu,GOLea} and providing revisions. I
am very grateful to the anonymous referee for helpful suggestions, and
in particular pointing out the references \cite{CohnPBW, RevoyPBW,
  HigginsPBW}. Finally, I would like to thank the participants and
organizers of the 2009 CIRM conference on operads for the opportunity
to present this work and for their helpful comments and questions.
This work was supported by the University of Chicago Mathematics
Department's VIGRE grant and a five-year AIM fellowship.

\section{Theorem \ref{starprodthm} using the graded PBW theorem}
\label{starprodthmsec} This section will not require the notion of
$\SS$-module or twisted algebras.

Here, we prove Theorem \ref{starprodthm} under the assumption
that the graded PBW isomorphism $\Sym \g \iso \gr(U \g)$ holds for a
given pre-Lie algebra $\g$ (e.g., if $\g$ is a free $\kk$-module, $\kk
\supseteq \QQ$, or $\kk$ is a Dedekind domain), by inductively
constructing a lift to a coalgebra isomorphism $\Sym \g \iso U \g$,
that has the needed properties. 

\subsection{Proof of Theorem \ref{starprodthm}}
It follows immediately from Proposition \ref{expprop}, specifically
formula (1) in \S \ref{starprodflasec}, that, given a star product $*$
as in the theorem, \eqref{deg2} yields a pre-Lie structure on $\g$.
Thus, it remains to show that any pre-Lie algebra $(\g, \circ)$ admits
a unique star product $*$ on $\Sym \g$ satisfying (i)--(iii).  By
Proposition \ref{expprop} again, uniqueness is immediate, so it
suffices to show that such a star product $*$ exists.

Let $(\g, \circ)$ be a pre-Lie algebra. We also let $\g$ denote the
associated Lie algebra with bracket $\{x,y\} := x \circ y - y \circ
x$.  We prove the theorem by constructing a coalgebra isomorphism
$\Phi: \Sym \g \rightarrow U \g$ (thereby simultaneously proving
Corollary \ref{plpbw}.(ii)).  We construct $\Phi$ inductively on
degree, such that $\gr(\Phi)$ is the graded PBW morphism (which is
assumed to be an isomorphism), and such that the induced star-product
$*$ on $\Sym \g$ satisfies \eqref{deg2} and \eqref{degcond}.  (One may
also notice that $\Phi$ extends uniquely in each degree.)

We now begin the inductive construction of $\Phi$.  In degree $1$,
set $\Phi(x)=x$ for all $x \in \g$.  
Inductively, begin with a coalgebra morphism
$\Phi_{\leq n-1}: \Sym^{\leq n-1} \g \rightarrow U \g$ such that
$\gr \Phi_{\leq n-1}$ is the graded PBW morphism, and such that the product
\begin{equation}\label{stardfn}
  *: \bigoplus_{i+j \leq n-1} \Sym^i \g \otimes \Sym^j \g \rightarrow \Sym^{\leq n-1} \g
\end{equation}
defined by $\Phi(a * b) = \Phi(a) \cdot \Phi(b)$ satisfies
\eqref{degcond}.  (Here $\cdot$ is the product in $U \g$).

We will extend $\Phi_{\leq n-1}$ to $\Phi_{\leq n}: \Sym^{\leq n} \g
\rightarrow U \g$ satisfying the same conditions. Note that, applying
condition (2) of Proposition \ref{expprop} repeatedly with $c \in
\g$,\footnote{This is the only part of Proposition \ref{expprop} that
  we need, and it also follows immediately from applying a single
  coproduct. That is, we will not really need the precise formula for
  $*$, unlike \cite{GuOu,GOLea}. (Even the uniqueness of $*$ follows
  from the existence argument without requiring Proposition
  \ref{expprop}, if we are slightly more careful.)}  $\Phi_{\leq n}$
will need to satisfy
\begin{multline} \label{phiconstr}
\Phi_{\leq n-1} (x_1 x_2 \cdots x_{n-1}) x_n = 
\Phi_{\leq n}((x_1 x_2 \cdots x_{n-1}) * x_n) \\ =
\Phi_{\leq n}( x_1 x_2 \cdots x_n) 
+ \Phi_{\leq n-1} \Bigl(\sum_{i=1}^{n-1} x_1 x_2 \cdots x_{i-1} (x_i \circ x_n) x_{i+1} \cdots x_{n-1} \Bigr), \forall x_1, x_2, \ldots, x_n \in \g.
\end{multline}
By linearity of $\Phi$, setting the LHS to the RHS uniquely extends
$\Phi_{\leq n-1}$ to $\Phi_n$.  We must check that the formula is
well-defined, by showing that the resulting expression for $\Phi_{\leq
  n}(x_1 x_2 \cdots x_n)$ is symmetric in the variables. It is
obviously symmetric in $x_1, x_2, \ldots, x_{n-1}$, so it suffices to
check that it is symmetric under permuting $x_{n-1}$ and $x_n$ (this
is the main step of the proof).

To do this, by the induction hypothesis, \eqref{phiconstr} is equivalent to
\begin{equation} 
  \Phi_{\leq n-1}(x_1 x_2 \cdots x_{n-2}) x_{n-1} x_n 
  = \Phi_{\leq n}(((x_1 x_2 \cdots x_{n-2}) * x_{n-1}) * x_n),
\end{equation}
by further expanding the RHS using the formula (2) of Proposition
\ref{expprop} with $c \in \g$.  So, to prove the symmetry of $x_{n-1},
x_n$, it suffices to show that
\begin{equation}
  \Phi_{\leq n-1}(x_1 x_2 \cdots x_{n-2}) \{x_{n-1}, x_n\} = 
  \Phi_{\leq n}(((x_1 x_2 \cdots x_{n-2}) * x_{n-1}) * x_n - 
  ((x_1 x_2 \cdots x_{n-2}) * x_{n}) * x_{n-1}),
\end{equation}
for all $x_1, \ldots, x_n \in \g$. This may be rewritten as the claim:
\begin{equation} \label{phiwelldef} (x_1 x_2 \cdots x_{n-2}) *
  \{x_{n-1}, x_n\} = ((x_1 x_2 \cdots x_{n-2}) * x_{n-1}) * x_n -
  ((x_1 x_2 \cdots x_{n-2}) * x_{n}) * x_{n-1}.
\end{equation}
By expanding the LHS and RHS using \eqref{phiconstr} (i.e., formula
(2) of Proposition \ref{expprop}), and substituting
$\{x_{n-1}, x_n\} = x_{n-1} \circ x_n - x_n \circ x_{n-1}$, this
follows from the pre-Lie identity \eqref{plax}.

Next, we show that $\Phi_{\leq n}$ is a morphism of coalgebras. This
follows from the fact that $\Phi_{\leq n-1}$ is a morphism of
coalgebras, using (for $a, b$ homogeneous of positive degree such that
$|ab|=n$)
\begin{multline} \label{coalgmorph} 
  \Delta(\Phi_{\leq n}(a * b)) =
  \Delta(\Phi_{\leq n-1}(a) \Phi_{\leq n-1}(b)) = \Delta(\Phi_{\leq
    n-1}(a)) \Delta(\Phi_{\leq n-1}(b)) \\ = \Phi_{\leq n-1}^{\otimes
    2}(\Delta(a)) \Phi_{\leq n-1}^{\otimes 2}(\Delta(b)) = \Phi_{\leq
    n}^{\otimes 2}(\Delta(a) * \Delta(b)).
\end{multline}

It remains to show that \eqref{phiconstr} defines a product $*$
such that
$\Sym^i \g * \Sym^{n-i} \g \subset \Sym^{\geq i} \g$. This follows
by definition for $n-1 \leq i \leq n$.  For every $1 \leq i \leq n-2$, it
suffices to show that
\begin{equation} \label{degcondpf} 
  (x_1 x_2 \cdots x_i) * (x_{i+1} *
  (x_{i+2} x_{i+3} \cdots x_n)) \in \Sym^{\geq i} \g, \quad \forall
  x_1, \ldots, x_n \in \g.
\end{equation}
Using reverse induction on $i$ and associativity of $*$, the statement
follows immediately. 

\section{Pre-Lie algebras as twisted Lie algebras} \label{pltwliesec}
In this section, we explain our main observation (Proposition
\ref{pltwlie}) connecting pre-Lie algebras with twisted Lie algebras
(whose definition we recall).  Along the way, we will use the notion
of (twisted) Poisson algebras, although in the end Proposition
\ref{pltwlie} does not require it.

\subsection{Preliminaries and motivation}\label{prelimsec}
Let $\kk$ be an arbitrary commutative ring. All unadorned tensor products,
symmetric algebras, tensor algebras, and so on will be assumed to be
over $\kk$.  We will use in this section Roman letters (e.g., $V$) for
(graded) $\kk$-modules, to avoid confusion with the $\SS$-modules we will discuss
in subsequent sections (which we will denote by Fraktur letters, except for twisted associative or commutative algebras).

Observe that a Lie algebra structure on a $\kk$-module $V$ is
equivalent to a Poisson algebra structure of degree $-1$
on the commutative algebra
$\Sym V$, i.e., a Poisson bracket
$\{\,, \}: \Sym V \otimes \Sym V \rightarrow \Sym V$ satisfying
\begin{equation}
\{V, V\} \subseteq V.
\end{equation}

We may consider also a homogeneous version of the above
construction. Roughly, we introduce a parameter $t$ of degree one, and
define a new bracket $\{v, w\} = t \cdot \{v, w\}_{\text{old}}$, for
$v, w \in V$.

Precisely, let $V[t] := V \otimes \kk[t]$ be the free $\kk[t]$-module
generated by $V$. By a Lie algebra structure on $V[t]$ over $\kk[t]$,
we mean a Lie algebra structure that is $\kk[t]$-linear, i.e., such
that $\{tf, g\}=t\{f,g\}$ for all $f,g$.  We consider $V[t]$ as graded
with $|t|=1=|V|$.

Next, form the commutative algebra
$\Sym_{\kk[t]} (V[t]) \cong \Sym V \otimes \kk[t]$, equipped with
the total grading such that $|t|=1=|V|$.  We consider \emph{graded}
Poisson structures on $\Sym_{\kk[t]} (V[t])$ over $\kk[t]$ (i.e.,
such that $t$ is central) which satisfy
\begin{equation}
\{V[t], V[t]\} \subseteq V[t], \quad \text{i.e., }
\{V, V\} \subseteq V t.  \label{tparam}
\end{equation}
Such structures are canonically equivalent to graded Lie algebra
structures on $V[t]$ over $\kk[t]$, which are in turn the
same as ordinary Lie algebra structures on $V$.

An alternative and useful construction is to set 
$\widehat V := V \oplus \langle t \rangle$,
and notice that
\begin{equation} 
\Sym_{\kk[t]} (V[t]) \cong \Sym_\kk \widehat V. \label{symvplus}
\end{equation} 
In these terms, we are interested in graded Poisson structures on
$\Sym_\kk \widehat V$ such that the second condition of \eqref{tparam}
holds.
\begin{rem}
  Without the condition \eqref{tparam}, graded Poisson structures on
  $\Sym_{\kk[t]} V[t]$ over $\kk[t]$ are the same as \emph{filtered}
  Poisson brackets on $\Sym V$ of degree $\leq 0$.  These are
  determined by their restriction to $V \otimes V \rightarrow \langle
  t^2 \rangle \oplus t V \oplus \Sym^2 V$, yielding a skew-symmetric
  form on $V$, a Lie bracket, and a quadratic Poisson bracket on $V$,
  satisfying certain compatibility conditions.
\end{rem}

\subsection{Twisted algebras and $\SS$-modules}\label{twalgsmodsec}
A central observation of this paper is that \emph{pre-Lie algebras
  arise as the twisted version of the above construction}.  In this
subsection we recall the needed preliminaries.
\begin{defn}
  An $\SS$-module is a $\NN$-graded $\kk$-module $\g = \bigoplus_{m
    \geq 0} \g_m$ together with an action of $S_m$ on $\g_m$ by
  $\kk$-module automorphisms.
\end{defn}
Note that an $\SS$-module concentrated in degree zero is the same as
an ordinary $\kk$-module.

A ``twisted'' (commutative, Lie, Poisson) algebra is a (commutative,
Lie, Poisson) algebra in the category of $\SS$-modules rather than
$\kk$-modules.  To make this precise, one equips the category of
$\SS$-modules with the structure of a symmetric monoidal category
(see, e.g., \cite{JSbmc}), using the following well known formulas:
\begin{gather} \label{smodtpdefn}
  \mathfrak{g} \otimes_{\SS} \mathfrak{h} := 
  \bigoplus_{p} \bigoplus_{m+n=p} \Ind_{S_m \times S_n}^{S_p} \g_m \otimes \mfh_n, \\
  \beta: \g \otimes_{\SS} \mfh \iso \mfh \otimes_{\SS} \g, \quad
  \beta(g \otimes h) = (12)^{|h|,|g|} (h \otimes g), \label{betadefn}
\end{gather}
where $g \in \g$ and $h \in \mfh$ are homogeneous of degrees $|g|$ and
$|h|$, respectively, and $(12)^{|h|,|g|} \in S_{|g|+|h|}$ is the
permutation which swaps the two blocks $\{1,2,\ldots,|h|\}$ and
$\{|h|+1, |h|+2, \ldots, |g|+|h|\}$, i.e.,
\begin{equation} \label{12dfn}
  (12)^{|h|,|g|} (a) = \begin{cases} a+|g|, & \text{if $1 \leq a \leq |h|$}, \\
    a-|h|, & \text{if $|h|+1 \leq a \leq |g|+|h|$}. \end{cases}
\end{equation}

Then, a twisted (commutative, Lie, Poisson) algebra can be defined by
first rewriting the usual definition in terms of binary operations $\g
\otimes \g \rightarrow \g$ satisfying certain diagrams, and then
replacing all tensor products by $\otimes_\SS$. For example, a twisted
commutative algebra is an $\SS$-module $A$ equipped with a binary
operation $\mu: A \otimes_{\SS} A \rightarrow A$ which is an
$\SS$-module morphism, is associative, and commutative in the sense
that $\mu = \mu \circ \beta$.  Without using symmetric monoidal
categories, we may write the necessary definitions explicitly as
follows:
\begin{ntn}
  For brevity, all explicit elements we write of $\SS$-modules will be
  assumed to be homogeneous without saying so.  In particular,
  whenever we write $|x|$ for $x$ an element of an $\SS$-module, $x$
  is assumed to be homogeneous (of degree $|x|$).
\end{ntn}
\begin{defn}
  A twisted associative algebra is an $\SS$-module $A = \bigoplus_{m
    \geq 0} A_m$ which is also a graded associative algebra such that
  $A_m \otimes A_n \rightarrow A_{m+n}$ is a morphism of $S_m \times
  S_n \subseteq S_{m+n}$-modules.

  A twisted commutative algebra is a twisted associative algebra such
  that the multiplication satisfies the identity
\begin{equation}\label{twcomm}
xy = (12)^{|y|,|x|} yx.
\end{equation}
A twisted Lie algebra is an $\SS$-module $\g$ equipped with a binary
operation $\{\,, \}: \g \otimes \g \rightarrow \g$ satisfying
\begin{gather}
\label{twss} \{x, y\} = -(12)^{|y|,|x|} \{y,x\}, \\
\label{twjac}
\{x, \{y, z\}\} + (123)^{|y|,|z|,|x|} \{y, \{z, x\}\} +
(132)^{|z|,|x|,|y|} \{z, \{x, y\} \} = 0.
\end{gather}
A twisted Poisson algebra is an $\SS$-module which is equipped with
both a twisted commutative and a twisted Lie algebra structure,
satisfying the Leibniz rule,
\begin{equation}\label{twleib}
\{xy, z\} = x \{y, z\} + (12)^{|y|,|x|,|z|} y\{x, z\}.
\end{equation}
\end{defn}
In \eqref{twjac} and \eqref{twleib} we used the following
generalization of \eqref{12dfn}:
\begin{ntn}
  Given any $i_1, i_2, \ldots, i_n \geq 0$ with sum $i_1 + \cdots +
  i_n = r$, and any permutation $\sigma \in S_n$, define the element
  $\sigma^{i_1, i_2, \ldots, i_n} \in S_r$ to be $\sigma$ applied to
  the blocks $[1, i_1]$, $[i_1+1, i_1+i_2], \ldots,
  [i_1+i_2+\cdots+i_{n-1}+1, i_1 + i_2+\cdots+i_n=r]$.  Precisely,
  $\sigma: [1, r] \rightarrow [1, r]$ is the permutation sending each
  interval $[i_1+\cdots+i_{j-1}+1, i_1+\cdots+i_j]$ onto
  $[i_{\sigma^{-1}(1)} + i_{\sigma^{-1}(2)} + \cdots +
  i_{\sigma^{-1}(\sigma(j)-1)} +1, i_{\sigma^{-1}(1)} +
  i_{\sigma^{-1}(2)} + \cdots + i_{\sigma^{-1}(\sigma(j)-1)} + i_j]$,
  preserving order.
\end{ntn}
In other words, $(12)^{i,j,k} = (12)^{i,j} \times \Id_{k}$,
$(23)^{i,j,k} = \Id_i \times (23)^{j,k}$, $(123)^{i,j,k} =
(12)^{i,k,j} (23)^{i,j,k}$, $(132)^{i,j,k} = (23)^{j,i,k}
(12)^{i,j,k}$, and so forth.
\subsection{Twisted Lie algebras and pre-Lie
  algebras} \label{twlieplsec} In this subsection we explain the
connection between pre-Lie algebras and twisted Lie algebras.
Heuristically, pre-Lie algebras are equivalent to ``twisted Lie
algebras concentrated in degree one.'' As stated, this doesn't make
sense because any \emph{graded}, let alone twisted, (Lie) algebra
concentrated in degree one is trivial; we will fix this by introducing
a parameter $t$ as before.

Given an $\SS$-module $\g$ concentrated in degree zero, i.e., just a
$\kk$-module $V := \g_0$, one may form a corresponding $\SS$-module
concentrated in \emph{degree one}, $\Sigma \g$, given by $(\Sigma
\g)_1 = V$ and $(\Sigma \g)_m = 0$ for $m \neq 1$. We call this the
\emph{suspension} of $\g$.  Given $x \in \g$, we abusively denote the
corresponding element of $\Sigma \g$ also by $x$.

One reason why this suspension is useful is the interesting but simple
fact that, if $\mfh$ is an $\SS$-module concentrated in degree one,
e.g., $\mfh = \Sigma \g$ as above, then
\begin{equation}\label{symsstk}
\Sym_{\SS} \mfh \cong T_\kk \mfh,
\end{equation}
where the notation $T_{\kk}$ means that we are taking the tensor
algebra \emph{in the category of $\kk$-modules}, using the standard
twisted-commutative structure via permutation of components.

As we pointed out, $\Sigma \g$ cannot admit a nontrivial binary
operation. To fix this, we add a parameter $t$, similarly to \S
\ref{prelimsec}. Namely, rather than considering twisted Lie algebra
structures on $\Sigma \g$ itself, we consider structures on
\begin{equation} \label{twgdefn} \Sigma \g[t] := \Sigma \g
  \otimes_{\SS} \kk[t] \cong \kk[t] \otimes \Sigma \g \otimes \kk[t],
\end{equation}
which is the module over the twisted-commutative algebra $\kk[t] =
\Sym_\SS \langle t \rangle$ freely generated by $\Sigma \g$.  By
definition, $\Sigma \mathfrak{g}[t]$ is an $\SS$-module with
$|t|=1=|\Sigma \g|$.

A twisted Lie algebra structure on $\Sigma \mathfrak{g}[t]$
\emph{over} $\kk[t]$ is, by definition, a twisted Lie bracket that is
$\kk[t]$-linear, i.e., such that $\{tf, g\} = t\{f,g\}$ for all $f,g$.
We may now state our main observation (which makes precise the
heuristic equivalence between pre-Lie algebras and ``twisted Lie
algebras concentrated in degree one''):
\begin{prop}\label{pltwlie}
  Twisted Lie algebra structures on $\Sigma \g[t]$ over $\kk[t]$ are
  canonically equivalent to pre-Lie structures $\circ: \g \otimes \g
  \rightarrow \g$ on $\g$. The equivalence is given by
\begin{equation} \label{brpleq}
\{x, y\} = (x \circ y) t - t (y \circ x), \quad \forall x, y \in \g.
\end{equation} 
Moreover, such $\Sigma \g[t]$ are in fact the associated Lie algebras
of twisted pre-Lie algebras.
\end{prop}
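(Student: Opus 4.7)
The plan is to reduce both sides of the asserted equivalence to $\kk$-linear data on $V := \g_0$ and match their defining identities term by term.

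First, by $\kk[t]$-linearity, any twisted Lie bracket on $\Sigma\g[t]$ over $\kk[t]$ is determined by its restriction to $V\otimes V \to (\Sigma\g[t])_2$. Using \eqref{smodtpdefn}, one computes $(\Sigma\g[t])_n \cong \Ind_{S_1\times S_{n-1}}^{S_n}(V\otimes\kk)$, which as a $\kk$-module is free of rank $n$ on $V$ with a canonical basis indexed by the position $i\in\{1,\dots,n\}$ of the ``$V$''-slot among $n$ slots otherwise filled with $t$. For $n=2$ I denote the two basis elements $xt$ and $tx$; the transposition $(12)$ exchanges them. Writing the restriction of the bracket as $\{x,y\} = p(x,y)\,t + t\,q(x,y)$ for $\kk$-linear maps $p,q\colon V\otimes V\to V$, twisted antisymmetry \eqref{twss} (with $|x|=|y|=1$) forces $q(x,y)=-p(y,x)$, and setting $x\circ y := p(x,y)$ yields exactly \eqref{brpleq}.

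Second, and most importantly, I would show that the twisted Jacobi identity \eqref{twjac} restricted to $V^{\otimes 3}$ is equivalent to the pre-Lie axiom \eqref{plax}. Iterating \eqref{brpleq} and using $\kk[t]$-linearity, each of the three cyclic terms of \eqref{twjac} lands in $(\Sigma\g[t])_3$, a free $V$-module of rank $3$ with basis elements I denote $xtt,\,txt,\,ttx$. Extracting the coefficient in any one slot (say $xtt$) and using $S_3$-equivariance to dispose of the others reduces the full identity to a single equation $V^{\otimes 3}\to V$, which after straightforward simplification is precisely \eqref{plax}. Reading this step in reverse produces the converse: any pre-Lie product $\circ$ yields a bracket satisfying both \eqref{twss} and \eqref{twjac}, so the two structures are canonically equivalent.

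Third, for the ``moreover'' claim, I would construct an explicit twisted pre-Lie product $\cdot$ on $\Sigma\g[t]$ by declaring $x\cdot y := (x\circ y)\,t$ for $x,y\in V$, letting $t$ multiply on both sides via the $\kk[t]$-bimodule structure ($t\cdot f:=tf$, $f\cdot t:=ft$), and extending $\kk[t]$-bilinearly. A direct check shows the antisymmetrization $a\cdot b - (12)^{|b|,|a|}(b\cdot a)$ reproduces \eqref{brpleq}, and the twisted pre-Lie identity \eqref{plax} for $\cdot$ applied to triples $x,y,z\in V$ collapses, by $\kk[t]$-bilinearity, back to the original pre-Lie identity for $\circ$ on $\g$.

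I expect the main obstacle to be the symmetric-group bookkeeping in the Jacobi verification: one must correctly interpret the prefactors $(123)^{|y|,|z|,|x|}$ and $(132)^{|z|,|x|,|y|}$ as cyclic permutations of the three basis slots of $(\Sigma\g[t])_3$, and carefully track how $\kk[t]$-linearity commutes the ``$t$''s past the bracketed factors when iterating \eqref{brpleq}. Once the setup is pinned down, the identity falls out transparently; it is really the setup that is the most error-prone part of the argument.
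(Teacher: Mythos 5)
Your proof follows essentially the same route as the paper's: restrict the $\kk[t]$-linear bracket to degree one, use the rank-$n$ decomposition of $(\Sigma\g[t])_n$ and twisted antisymmetry to arrive at the form \eqref{brpleq}, reduce the twisted Jacobi identity to $V^{\otimes 3}$ and inspect the three $t$-slot components of $(\Sigma\g[t])_3$ to recover the pre-Lie axiom \eqref{plax}, and finally equip $\Sigma\g[t]$ with the $\kk[t]$-bilinearly extended pre-Lie product $x\cdot y := (x\circ y)t$, which coincides with the paper's $(t^a v t^b)\circ(t^c w t^d)=t^a(v\circ w)t^{b+c+d}$. One harmless inaccuracy: your declarations $t\cdot f:=tf$ and $f\cdot t:=ft$ never actually apply here because $t\notin\Sigma\g[t]$ (every element of $\Sigma\g[t]$ carries a $V$-factor), so they are superfluous rather than wrong.
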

For the final statement, twisted pre-Lie algebras are defined as
pre-Lie algebras in the category of $\SS$-modules; explicitly, the
twisted pre-Lie axiom is
\begin{equation}\label{twplax}
  (x \circ y) \circ z - x \circ (y \circ z) = 
  (23)^{|x|,|z|,|y|} \bigl((x \circ z) \circ y - x \circ (z \circ y)\bigr).
\end{equation}
\begin{proof} Begin with a twisted Lie algebra structure on $\Sigma
  \g[t]$.  By restricting to $\Sigma \g$, we obtain a map
\begin{equation}
  \Sigma \g \otimes \Sigma \g \rightarrow ((\Sigma \g) 
t \oplus t (\Sigma \g)), \quad \{v,w\} = (x \circ y) t - t (y \circ x).
\end{equation}
The map $\circ$ is equivalent to the bracket $\{\,, \}$ using the
formula
\begin{equation} \label{circtopoiss} 
  \{t^a x t^b, t^c y t^d\} = t^a (x
  \circ y) t^{b+c+d} - t^{a+b+c} (y \circ x) t^d.
\end{equation}
We conclude that twisted Lie structures on $\Sigma \mathfrak{g}[t]$
over $\kk[t]$ are the same as binary operations $\circ: \g \otimes \g
\rightarrow \g$ such that the bracket defined by \eqref{circtopoiss}
satisfies \eqref{twjac}.  It is easy to see that \eqref{twjac} holds
for this bracket if and only if it holds whenever $x,y,z \in \Sigma
\g$.  In this case, the LHS of \eqref{twjac} lives in $(\Sigma \g) t^2
\oplus t (\Sigma \g) t \oplus t^2 (\Sigma \g)$.  Each component of the
resulting identity is easily seen to be equivalent to the pre-Lie
condition \eqref{plax}.

For the final statement, one defines the twisted pre-Lie structure on
$\Sigma \g[t]$ by $(t^a v t^b) \circ (t^c w t^d) = t^a (v \circ w)
t^{b+c+d}$. It follows from the above analysis that this is a twisted
pre-Lie structure.
\end{proof}
\begin{rem} 
  One can also argue slightly differently to prove equivalence of the
  twisted Jacobi identity for $\Sigma \g[t]$ and the pre-Lie identity
  for $\g$: for \eqref{circtopoiss} to define a twisted Lie bracket,
  the commutator $\{x,y\} := x \circ y - y \circ x$ must be a Lie
  bracket by setting $t = 1$, and then each component of \eqref{twjac}
  becomes the condition that $\circ$ is a right Lie action of
  $(\mathfrak{g}, \{\,, \})$ on itself. As remarked by M. Livernet,
  this is well known to be equivalent to the pre-Lie condition.
\end{rem}
\subsection{Twisted Poisson algebras}
We first make some definitions we will need for the rest of the
paper. Let
\begin{equation}
  \Sym_{\SS, \kk[t]} \Sigma \g[t] := \Sym_{\SS} \Sigma \g[t] / ( t
  \otimes f - tf)\end{equation}
be the twisted-commutative algebra over $\kk[t]$ generated by the
$\kk[t]$-module $\Sigma \g[t]$. Here, the quotient is by the
\emph{twisted-commutative ideal} generated by $t \otimes f -
tf$.
Analogously to \eqref{symvplus}, and using \eqref{symsstk}, we have
the formula
\begin{equation}\label{tvplus}
  \Sym_{\SS, \kk[t]} \Sigma \g[t] \cong \Sym_{\SS} 
  \widehat \g \cong T_{\kk} \widehat \g,
\end{equation}
where $\Sym_{\SS} \widehat \g$ is equipped with its usual structure of
twisted-commutative algebra, and
\begin{equation} \label{ghatdefn}
\widehat \g := \Sigma \g  \oplus \langle t \rangle,
\end{equation}
again with $|t|=1$. 

Now, we explain a ``quasiclassical'' analogue of Theorem
\ref{ncstarprodthm} of the next section (the result which implies
Theorem \ref{starprodthm} in full generality), which can be regarded
as a translation of Proposition \ref{pltwlie} to the Poisson (rather
than Lie) setting.  This will not be needed for the rest of the paper,
so the reader can skip it if desired.
\begin{prop} 
  \label{twpoissprop} Twisted Poisson structures on $T_\kk \widehat g
  = \Sym_{\SS, \kk[t]} (\Sigma \g[t])$ over $\kk[t]$ satisfying
\begin{equation}
  \{\Sigma \g[t], \Sigma \mathfrak{g}[t]\} \subseteq \Sigma \mathfrak{g}[t], 
  \quad \text{i.e., }
  \{\Sigma \g, \Sigma \g\} \subseteq (t (\Sigma \g) \oplus (\Sigma \g) t),  
  \label{twtparam}
\end{equation}
are equivalent to pre-Lie algebra structures $\circ$ on $\g$, by
\eqref{brpleq}.
\end{prop}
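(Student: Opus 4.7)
The plan is to deduce Proposition \ref{twpoissprop} from Proposition \ref{pltwlie} by showing that twisted Poisson structures on $\Sym_{\SS,\kk[t]} \Sigma\g[t]$ over $\kk[t]$ satisfying the restriction condition \eqref{twtparam} are the same, via restriction to generators and extension by the twisted Leibniz rule, as twisted Lie structures on $\Sigma\g[t]$ over $\kk[t]$. Once that is established, Proposition \ref{pltwlie} converts the latter into pre-Lie structures on $\g$ via \eqref{brpleq}, which is precisely the claim.

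The forward direction is essentially a tautology: if $\{\,,\}$ is a twisted Poisson bracket on $\Sym_{\SS,\kk[t]} \Sigma\g[t]$ over $\kk[t]$ satisfying \eqref{twtparam}, then its restriction to the $\kk[t]$-submodule $\Sigma\g[t]$ takes values in $\Sigma\g[t]$, and the twisted skew-symmetry \eqref{twss} and twisted Jacobi identity \eqref{twjac} for the Poisson bracket immediately imply the corresponding identities for the restriction. Thus the restriction is a twisted Lie bracket on $\Sigma\g[t]$ over $\kk[t]$.

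The content is in the converse. Given a twisted Lie bracket on $\Sigma\g[t]$ over $\kk[t]$, I would extend it inductively to the twisted-commutative algebra $\Sym_{\SS,\kk[t]}\Sigma\g[t] \cong T_\kk \widehat\g$ by iterated application of the twisted Leibniz rule \eqref{twleib}. Uniqueness is immediate since the bracket is forced on generators and the Leibniz rule then dictates its value on arbitrary products. For existence, I would check in order that (a) the formula respects the defining relations of the twisted-commutative algebra, namely $xy = (12)^{|y|,|x|} yx$ and $t \otimes f = tf$ — the former uses twisted skew-symmetry \eqref{twss} and the fact that Leibniz is applied symmetrically in its two arguments, while the latter is precisely the $\kk[t]$-linearity of the bracket on $\Sigma\g[t]$; (b) the twisted skew-symmetry \eqref{twss} passes from generators to all of $\Sym_{\SS,\kk[t]}\Sigma\g[t]$; (c) the twisted Jacobi identity \eqref{twjac} holds on $\Sym_{\SS,\kk[t]}\Sigma\g[t]$, which is verified by induction on total word length, with the base case being \eqref{twjac} on the generators and the inductive step using the Leibniz rule together with (b), exactly as in the classical (untwisted) extension of a Lie bracket on $V$ to a Poisson bracket on $\Sym V$.

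The main obstacle is keeping track of the block permutations $\sigma^{i_1,\ldots,i_n}$ during the Leibniz/Jacobi manipulations in step (c), since every transposition of factors in a monomial incurs a twist. However, because the category of $\SS$-modules is symmetric monoidal and twisted commutative/Lie/Poisson algebras are defined by the standard categorical diagrams, the classical diagrammatic proof that a Lie bracket on $V$ extends uniquely to a Poisson bracket on $\Sym V$ applies verbatim in this setting, with the braiding $\beta$ from \eqref{betadefn} supplying all signs automatically. Combining this extension result with Proposition \ref{pltwlie} produces the stated bijection with pre-Lie structures, and tracing through \eqref{circtopoiss} confirms that the correspondence is given on $\g \otimes \g$ by \eqref{brpleq}.
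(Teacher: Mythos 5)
Your proposal is correct and follows essentially the same route as the paper: reduce the claim to the equivalence between twisted Poisson structures on $\Sym_{\SS,\kk[t]}\Sigma\g[t]$ satisfying \eqref{twtparam} and twisted Lie structures on $\Sigma\g[t]$ over $\kk[t]$ (restriction in one direction, Leibniz extension in the other), and then invoke Proposition \ref{pltwlie}. The only difference is that for the converse (existence and uniqueness of the Leibniz extension) the paper delegates to a cited result of Oudom--Guin while you sketch the inductive verification directly; both cover the same step.
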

\begin{proof}
  We need to show that twisted Poisson structures on $T_\kk\widehat \g
  = \Sym_{\SS, \kk[t]} (\Sigma \g[t])$ over $\kk[t]$ satisfying
  \eqref{twtparam} are equivalent to twisted Lie algebra structures on
  $\Sigma \g[t]$. This follows as in the ordinary setting: any twisted
  Poisson structure on $T_\kk \widehat \g$ restricts to a twisted Lie
  algebra structure on $\Sigma \g[t]$; conversely, there is a unique
  extension of any twisted Lie algebra structure on $\Sigma \g[t]$
  satisfying \eqref{twtparam} to a twisted Poisson structure on $T_\kk
  \widehat \g$. The latter result is an easy special case of the fact
  (see \cite[Proposition 1.10]{Spybe}) that, for every $\SS$-module
  $\mathfrak{h}$, twisted Poisson structures on $\Sym_{\SS}
  \mathfrak{h}$ are equivalent to binary operations $\mathfrak{h}
  \otimes_{\SS} \mathfrak{h} \rightarrow \Sym_{\SS} \mathfrak{h}$
  satisfying \eqref{twss} and \eqref{twjac} for $x,y,z \in
  \mathfrak{h}$, and similarly when $\mathfrak{h}$ is over
  $\kk[t]$. The condition \eqref{twtparam} guarantees that the binary
  operation $\Sigma \mathfrak{g}[t] \otimes \Sigma \mathfrak{g}[t]
  \rightarrow \Sym_{\SS, \kk[t]} (\Sigma \g[t])$ in fact lands in
  $\Sigma \mathfrak{g}[t]$, which is therefore a twisted Lie bracket.
 \end{proof}

 \section{Generalizations and full proof of Theorem
   \ref{starprodthm}} \label{nctwgensec} The first goal of this
 section is to state and prove a generalization of Theorem
 \ref{starprodthm} which yields a twisted coalgebra isomorphism $T_\kk
 \widehat \g = \Sym_{\SS,\kk[t]} (\Sigma \g[t]) \iso U_{\SS,\kk[t]}
 (\Sigma \g[t]) = U_{\SS} \widehat \g$.  The resulting Theorem
 \ref{ncstarprodthm} can be viewed as a \emph{noncommutative} analogue
 of Theorem \ref{starprodthm}, since it replaces $\Sym_\kk \g$
 (better, $\Sym_\kk \widehat \g = \Sym_\kk \g \otimes_\kk \kk[t]$)
 with $T_\kk\widehat \g$.  Moreover, this theorem is, in a sense,
 easier to prove than the original one, since the graded PBW
 isomorphism will be automatic rather than an assumption, and some of
 the argument simplifies.

 Then, we explain how to further generalize this to prove twisted
 analogues of Theorem \ref{starprodthm} and Corollary \ref{plpbw},
 applicable to any \emph{twisted pre-Lie algebra} (Theorem
 \ref{twstarprodthm}), or more generally, to any pre-Lie algebra in a
 suitable symmetric monoidal category (Theorem \ref{catstarprodthm}).

 In the case of twisted pre-Lie algebras, the categorical approach
 requires working with $\SS$-bimodules. This is not entirely
 satisfactory, and we give a construction entirely in $\SS$-modules,
 by extending the suspension functor to act on arbitrary
 $\SS$-modules, in particular taking twisted pre-Lie algebras to
 twisted pre-Lie algebras. This culminates in Theorem
 \ref{nctwstarprodthm}, which implies all the results about twisted
 pre-Lie algebras without using $\SS$-bimodules.  There is also a
 common generalization of this result and the categorical Theorem
 \ref{catstarprodthm}: see Remark \ref{commongenrem}.

 Before proving these results, we need to recall Stover's graded PBW
 theorem for connected twisted Lie algebras.  We also recall the
 notions of twisted coalgebras and bialgebras, relevant to the twisted
 symmetric and enveloping algebras $\Sym_\SS \g$ and $U_{\SS} \g$.
 These occupy \S $\!\!\!$ \S \ref{twpbwsec}, \ref{twcobisec}.

 \subsection{Twisted PBW theorem} \label{twpbwsec} By recasting
 pre-Lie algebras as twisted Lie algebras, we may apply the twisted
 PBW theorem.  As is well known (see, e.g., \cite{Bar}), if $\mfh$ is
 a twisted Lie algebra which is free as a $\kk$-module, it satisfies
 the graded PBW theorem. That is, one may consider the twisted
 enveloping algebra
\begin{equation}
U_\SS \mfh := T_\SS \mfh / (xy - (12)^{|y|,|x|} yx - \{x,y\})_{x,y \in \mfh},
\end{equation}
where $T_\SS \mfh$ is the free twisted associative algebra generated
by $\mfh$, and the quotient is by a \emph{twisted algebra} ideal
(i.e., a usual ideal which is also an $\SS$-submodule).  Then, $U_\SS
\mfh$ is again filtered using $\kk = F_0 (T_{\SS} \mfh) \subseteq F_1
(T_{\SS} \mfh) = \kk \oplus \mfh$. Under the same assumptions as
before (e.g., $\mfh$ is free as a $\kk$-module or $\kk \supseteq
\QQ$), the canonical epimorphism is an isomorphism
\begin{equation}\label{twpbw}
\Sym_\SS \mfh \iso \gr U_\SS \mfh,
\end{equation}
preserving all structures.  Remarkably, in the case that $\mfh$ is
\emph{connected}, i.e., $\mfh$ is concentrated in positive degrees,
Stover noticed \cite{St} that the above isomorphism holds
\emph{without any hypotheses} on $\mfh$ and $\kk$:
\begin{thm} \label{stthm} \cite{St} Let $\kk$ be any commutative ring,
  and let $\mfh$ be any connected twisted Lie algebra over
  $\kk$. Then, the canonical map \eqref{twpbw} is an isomorphism.
\end{thm}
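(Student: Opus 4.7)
The surjectivity of the canonical epimorphism $\Sym_\SS \mfh \onto \gr U_\SS \mfh$ is immediate from the definition of $U_\SS \mfh$, so my plan is to construct a coalgebra isomorphism $\phi: \Sym_\SS \mfh \iso U_\SS \mfh$ whose associated graded is the canonical map; this directly implies the theorem. Over $\QQ$ one would take $\phi$ to be the symmetrization $x_1 \cdots x_n \mapsto \frac{1}{n!}\sum_{\sigma \in S_n} x_{\sigma(1)} \cdots x_{\sigma(n)}$, and the task here is to produce a natural analogue that avoids denominators, using only the connectedness hypothesis on $\mfh$.

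I would build $\phi$ by induction on the internal $\SS$-degree $d$, in close analogy with the inductive construction in the proof of Theorem \ref{starprodthm} in \S \ref{starprodthmsec}. Connectedness ($\mfh_0 = 0$) plays the crucial role: it forces $(\Sym_\SS^n \mfh)_d = 0$ whenever $n > d$, so in each internal degree only finitely many symmetric powers $\Sym_\SS^{\leq d} \mfh$ contribute, which makes the induction tractable. At each stage, $\phi$ is extended to the new summands by the formula forced by compatibility with the coproduct, in the style of \eqref{phiconstr}, and the nontrivial verification is well-definedness under the $S_n$-action permuting tensor factors.

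The heart of the argument, and the main obstacle, is that in this well-definedness check the familiar positive-characteristic PBW obstructions -- the ``$p$-th power'' type relations such as $\{x,x\}=0$ for even $x$ and $\{x,\{x,x\}\}=0$ for odd $x$ in the Lie super case of Example \ref{grcatpbwexam} -- should simply be absent in the connected twisted setting. Structurally, this is because for $x \in \mfh_m$ with $m \geq 1$, the tensor square $x \otimes x \in \mfh^{\otimes_\SS 2}$ lives in $\Ind_{S_m \times S_m}^{S_{2m}}(\mfh_m \otimes \mfh_m)$, and the $S_2$-swap acts via the nontrivial block permutation $(12)^{m,m} \in S_{2m}$ rather than diagonally within a single copy of $\mfh_m$. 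Thus ``$x^2$'' in $\Sym_\SS^2 \mfh$ is realized by a free $S_2$-orbit, and no denominator is needed to single out a representative; analogous statements hold for higher symmetric powers when $\mfh$ is connected.

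To turn this observation into a clean proof, a concrete route I would pursue is first to establish the theorem for the free connected twisted Lie algebra on a free $\kk$-module, where an explicit Hall-type basis should produce $\phi$ directly, and then extend to arbitrary connected $\mfh$ by resolving through such free objects; connectedness ensures that the relevant resolutions behave well in each internal degree because only finitely many summands contribute there. Alternatively, one may attempt the direct inductive construction sketched above and verify the coalgebra and filtration compatibilities by explicit computation as in \S \ref{starprodthmsec}; I expect this second route to be shorter to write but more delicate in the well-definedness step, which is why either approach will hinge on carefully exploiting the degree-shift observation of the previous paragraph.
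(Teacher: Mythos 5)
Your opening move---constructing a twisted coalgebra isomorphism $\phi: \Sym_\SS \mfh \iso U_\SS \mfh$---is blocked from the start: such an isomorphism does not exist in general, even for connected $\mfh$. Remark \ref{stcoalgrem} explains why: Stover lifted \eqref{twpbw} only to an isomorphism of underlying $\kk$-modules, not of $\SS$-modules, and an $\SS$-module (a fortiori twisted coalgebra) lift is impossible in general. Viewing an ordinary connected graded Lie algebra $W$ as a connected twisted Lie algebra $\widetilde W$ with trivial $S_m$-actions, an $\SS$-module isomorphism $\Sym_\SS \widetilde W \iso U_\SS \widetilde W$ would, after taking coinvariants in each degree, yield the usual graded PBW isomorphism $\Sym_\kk W \iso \gr U_\kk W$, which can fail---see the appendix's counterexample from \cite{CohnPBW}. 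A secondary difficulty is that your inductive formula ``in the style of \eqref{phiconstr}'' is not available: that formula is driven by the pre-Lie product $x_i \circ x_n$, and a bare twisted Lie algebra has no such structure, nor does compatibility with the coproduct alone force a well-defined extension.

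Your structural observation in the third paragraph---that the $S_n$-action permuting tensor factors of $T^n_\SS \mfh$ is free in each internal degree when $\mfh$ is connected, because each $m_i>0$---is exactly the right lever and is indeed the engine of the paper's proof, but the paper uses it without assembling any lift. In \S \ref{catpbwsec}, Proposition \ref{catpbwprop} reduces the graded PBW isomorphism to the statement that, for each $n$, the short exact sequence of $S_n$-modules \eqref{pbwses} (built from the twisted action \eqref{genpbwact}, which encodes the defining relations of $U_\SS\mfh$) remains exact after taking $S_n$-coinvariants. Connectedness makes $T^n\mfh$ a direct sum of induced modules $\Ind_{\{1\}}^{S_n}(-)$ in each internal degree, so the surjection in \eqref{pbwses} admits an $S_n$-equivariant section and exactness is preserved. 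No global filtered or coalgebra lift is needed---only degree-by-degree equivariant splittings, which do not, and cannot, patch together into an $\SS$-module map $\Sym_\SS \mfh \to U_\SS \mfh$.
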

One rough explanation why this holds is that, for all $n$, the tensor
algebra $T_\SS^n \mfh$ is a free $S_n$-module in each degree (by
permutation of the $\mfh$-factors), where by ``free,'' we mean a
module of the form $\Ind_{\{1\}}^{S_n} M = \kk[S_n] \otimes_\kk M$ for
some $M$ (which is \emph{not} necessarily free as a $\kk$-module). See
\S\ref{catpbwsec} for a sketch of a simple proof.
\begin{caution} It is tempting to conclude that Theorem \ref{stthm}
  for connected twisted Lie algebras, together with the connection of
  Proposition \ref{pltwlie} between pre-Lie algebras $\g$ and
  connected twisted Lie algebras $\Sigma \g[t]$, yields the pre-Lie
  graded PBW theorem (Theorem \ref{plpbw}.(i)).  However, we do not
  know how to conclude this. In more detail, if $\g$ is a pre-Lie
  algebra, one deduces from these results that $\Sym_{\SS,\kk[t]}
  \Sigma \g[t] \iso \gr U_{\SS,\kk[t]} \Sigma \g[t]$.  By taking
  $S_n$-coinvariants in each degree $n$ and setting $t=1$ (as we will
  do later to deduce Theorem \ref{starprodthm} from Theorem
  \ref{ncstarprodthm}), one obtains $\Sym \g \iso (\gr U_{\SS,\kk[t]}
  \Sigma \g[t])_{\SS} |_{t=1}$, where here for an $\SS$-module $M =
  \bigoplus_{n \geq 0} M_n$, we denote the total coinvariants by
  $M_{\SS} := \bigoplus_{n \geq 0} (M_n)_{S_n}$.  However, one would
  instead like to have an isomorphism with $\gr U \g = \gr
  \bigl((U_{\SS,\kk[t]} \Sigma \g[t])_{\SS}\bigr) |_{t=1}$.  It is,
  however, not always true that taking coinvariants commutes with
  taking associated graded.  One could fix this if one had a quantum
  PBW isomorphism, $\Sym \mfh \iso U \mfh$, but Stover only showed
  that such an isomorphism exists as a map of $\kk$-modules, not as a
  map of $\SS$-modules: in fact, this isomorphism does \emph{not}
  exist for all connected twisted Lie algebras, by the remark below
  (which also shows that $(\gr U_{\SS} \mfh)_{\SS} \not \cong \gr
  ((U_{\SS} \mfh)_{\SS})$ for general twisted Lie algebras $\mfh$).
  This isomorphism does exist, however, in our case $\mfh = \Sigma
  \g[t]$ for $\g$ a pre-Lie algebra, but we need Theorem
  \ref{ncstarprodthm} to show it.
\end{caution}
\begin{rem} \label{stcoalgrem} As we just mentioned, in the connected
  twisted case, the graded PBW isomorphism does \emph{not} always lift
  to an isomorphism of $\SS$-modules $\Sym_{\SS} \mfh \iso U_{\SS}
  \mfh$ (Stover lifted it only to an isomorphism of $\kk$-modules),
  i.e., there is no quantum PBW theorem in this setting.  To see this,
  note that, when the graded PBW isomorphism does lift to an
  $\SS$-module isomorphism, we can take $S_n$-coinvariants in each
  degree to obtain $\Sym_{\kk} (\bigoplus_{m \geq 0} \mfh_{S_m}) \iso
  U_{\kk} (\bigoplus_{m \geq 0} \mfh_{S_m})$.  For example, given any
  \emph{ordinary} graded connected Lie algebra $W$, we can view it as
  a connected twisted algebra $\widetilde W$ with \emph{trivial} $S_m$
  actions in each degree $m$. Then, if the twisted graded PBW
  isomorphism for $\widetilde W$ lifts to an $\SS$-module isomorphism,
  resp., twisted coalgebra isomorphism, then the usual graded PBW
  morphism for $W$ also lifts to a $\kk$-module, resp., coalgebra
  isomorphism, $\Sym_{\kk} W \iso U_{\kk} W$.  In particular, the
  usual graded PBW theorem holds for $W$, $\Sym_{\kk} W \iso \gr
  U_{\kk} W$. However, there are many examples of connected ordinary
  Lie algebras $W$ for which the graded PBW theorem doesn't hold, such
  as the PBW counterexample \cite[\S 5]{CohnPBW} recalled in the
  appendix.
\end{rem}

\subsection{Twisted coalgebras and bialgebras}\label{twcobisec}
A twisted commutative coalgebra $A$ is a commutative coalgebra in the
category of $\SS$-modules, which explicitly means the following: $A$
is an $\SS$-module equipped with a comultiplication $A \rightarrow A
\otimes_{\SS} A$, which is coassociative:
\begin{equation}
(\Id \otimes_{\SS} \Delta) \circ \Delta = (\Delta \otimes_{\SS} \Id) \circ \Delta,
\end{equation}
and cocommutative,
\begin{equation}
\Delta = \beta \circ \Delta,
\end{equation}
where $\beta$ is given in \eqref{betadefn}. We also require a counit,
$\varepsilon: A \rightarrow \kk$, where $\kk$ is viewed as an
$\SS$-module concentrated in degree zero, such that
\begin{equation}
  (\varepsilon \otimes_\SS \Id) \circ \Delta = \Id 
  = (\Id \otimes_\SS \varepsilon) \circ \Delta.
\end{equation}
The twisted symmetric and enveloping algebras $\Sym_\SS \mfh$ and
$U_\SS \mfh$ are equipped with a standard twisted cocommutative
coproduct, which is the unique $\SS$-module extension of
\begin{equation}\label{twcoprodfla}
  \Delta(x_1 x_2 \cdots x_m) = \sum_{I \sqcup J = \{1,2,\ldots,m\}} 
  (\sigma_{I,J})^{|x_1|,\ldots,|x_m|} \prod_{i \in I} x_i \otimes 
  \prod_{j \in J} x_j, \quad \forall x_1, \ldots, x_m  \in \mfh,
\end{equation}
where $\sigma_{I,J} \in S_m$ is the permutation which reorders the
indices of the $x_i$'s that appear into increasing order (i.e., taking
the products over $I$ and $J$ to be in increasing order, this is the
appropriate $(|I|,|J|)$-shuffle).  The counit $\varepsilon$ is the
quotient by the augmentation ideal $(\mathfrak{g})$.

A clearer, although less explicit, way to define the above coproduct
is to note that $\Sym_\SS \mfh$ and $U_\SS \mfh$ are in fact
\emph{twisted bialgebras}, which means that $\Delta$ and $\varepsilon$
are morphisms of twisted associative algebras, or equivalently that
the multiplication and unit are morphisms of twisted coassociative
coalgebras. Explicitly,
\begin{equation}
  \Delta(xy) = \Delta(x) \cdot \Delta(y), \quad \varepsilon(xy) = 
  \varepsilon(x) \varepsilon(y), \quad \varepsilon(1) = 1,
\end{equation}
where the multiplication $\cdot$ on $A \otimes_{\SS} A$ is given by
\begin{equation}
  (u \otimes v) \cdot (x \otimes y) = 
  (23)^{|u|,|x|,|v|,|y|} (ux \otimes vy), \quad \forall u,v,x,y \in A.
\end{equation}
Then, \eqref{twcoprodfla} is the unique extension of $\Delta(x) = 1
\otimes x + x \otimes 1, x \in \mfh$, from $\mfh$ to all of $\Sym_\SS
\mfh$ or $U_\SS \mfh$.

In the $\kk[t]$-module case, one similarly defines twisted-commutative
coproducts on $\Sym_{\SS,\kk[t]} \mfh$ and $U_{\SS,\kk[t]} \mfh$,
yielding twisted bialgebras over $\kk[t]$.  Being ``over $\kk[t]$''
here refers to the fact that $\kk[t]$ is a sub-bialgebra ($\kk[t]$
itself is a twisted bialgebra via $\kk[t] = \Sym_{\SS} \langle t
\rangle$, i.e., the usual graded bialgebra $\kk[t]$ with $|t|=1$
equipped with the trivial $S_m$-action in each degree $m$).  Thus,
specializing to \eqref{twgdefn}, $T_\kk\widehat \g = \Sym_{\SS,\kk[t]}
\Sigma \g[t]$ and $U_{\SS} \widehat \g := U_{\SS,\kk[t]} \Sigma \g[t]$
are twisted cocommutative bialgebras.

\subsection{``Noncommutative'' generalization of Theorem
  \ref{starprodthm}} \label{ncgensubsec} We will use the filtration on
$T_{\kk} \widehat \g$ generated by the $\{0,1\}$-degree filtration on
$\widehat \g$: $\langle t \rangle = F_0 (\widehat \g) \subseteq F_1
(\widehat \g) = \widehat \g$.
\begin{thm}\label{ncstarprodthm}
  Let $\kk$ be any commutative ring and $\g$ any $\kk$-module.  Star
  products $*$ on $T_\kk\widehat \g$ satisfying the conditions
\begin{enumerate}
\item[(i)] $*$ forms a twisted bialgebra with the usual coproduct $\Delta$,
\item[(ii)] $*$ is a filtered product whose associated graded is the
usual product on $T_\kk\widehat \g$,
\item[(iii)] $*$ satisfies 
  \begin{equation} \label{ncdegcond} (T_\kk^m (\Sigma \g)) *
    T_\kk\widehat \g \subseteq (T_\kk^m (\Sigma \g)) (T_\kk\widehat \g),
\end{equation}
\item[(iv)] $f * t = ft$ and $t * f = tf$, for all $f \in
  T_\kk\widehat \g$,
\end{enumerate}
are equivalent to (right) pre-Lie algebra structures $\circ: \g
\otimes \g \rightarrow \g$, under the correspondence
\begin{equation} \label{ncdeg2}
x * y = xy + (x \circ y)t, \quad \forall x, y \in \g.
\end{equation}
\end{thm}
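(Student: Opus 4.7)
The plan is to mirror the inductive proof of Theorem \ref{starprodthm} from \S\ref{starprodthmsec}, working now in the twisted setting $T_\kk\widehat\g \cong \Sym_{\SS,\kk[t]}\Sigma\g[t]$ and $U_\SS\widehat\g = U_{\SS,\kk[t]}\Sigma\g[t]$, but with the crucial advantage that Stover's theorem (Theorem \ref{stthm}) applied to the connected twisted Lie algebra $\Sigma\g[t]$ removes the need to assume the graded PBW theorem for $\g$. The correspondence between star products and pre-Lie structures factors through Proposition \ref{pltwlie}: pre-Lie structures $\circ$ on $\g$ are identified with twisted Lie brackets on $\Sigma\g[t]$ over $\kk[t]$ via \eqref{brpleq}, and to each such bracket I will associate the star product arising from the associative multiplication on $U_{\SS,\kk[t]}\Sigma\g[t]$ transported along a suitable twisted coalgebra isomorphism $\Phi: T_\kk\widehat\g \iso U_\SS\widehat\g$ over $\kk[t]$.

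Starting from a pre-Lie algebra $(\g,\circ)$, form $\Sigma\g[t]$ as a twisted Lie algebra over $\kk[t]$. Since it is concentrated in positive degrees, Stover's theorem gives a canonical iso $T_\kk\widehat\g \iso \gr U_\SS\widehat\g$ of twisted cocommutative bialgebras over $\kk[t]$. I would then construct the lift $\Phi$ by induction on the total $\widehat\g$-degree. In degree at most one, set $\Phi=\Id$. Inductively, extend $\Phi_{\leq n-1}$ to $\Phi_{\leq n}$ via the twisted analogue of \eqref{phiconstr}: for $x_1,\ldots,x_n \in \Sigma\g$, the product $\Phi_{\leq n-1}(x_1\cdots x_{n-1})\cdot x_n$ in $U_\SS\widehat\g$, expanded via the coproduct (the twisted analogue of formula (2) of Proposition \ref{expprop}), is declared to equal $\Phi_{\leq n}(x_1\cdots x_n)$ plus a known lower-degree correction involving $x_i\circ x_n$. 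Condition (iv) reduces the entire construction to letters in $\Sigma\g$ by $\kk[t]$-linearity.

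The well-definedness of this inductive definition is the main obstacle, as in \S\ref{starprodthmsec}: one must verify the required $S_n$-equivariance, which (using the obvious symmetry in the first $n-1$ letters) reduces to compatibility under swapping the last two letters $x_{n-1}, x_n$ by the appropriate block permutation. This swap identity is the twisted analogue of \eqref{phiwelldef}, and after expanding both sides using the inductive formula and substituting $\{x_{n-1},x_n\} = (x_{n-1}\circ x_n)t - t(x_n\circ x_{n-1})$, it reduces precisely to the pre-Lie axiom \eqref{plax} (equivalently, the twisted Jacobi identity on $\Sigma\g[t]$ from Proposition \ref{pltwlie}). The twisted coalgebra property is then inherited from $\Phi_{\leq n-1}$ via the twisted analogue of \eqref{coalgmorph}, and condition \eqref{ncdegcond} follows by reverse induction on the split point, exactly as in \S\ref{starprodthmsec}.

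Conversely, given $*$ satisfying (i)--(iv), define $\circ$ by \eqref{ncdeg2}; I expect the pre-Lie axiom to follow from associativity of $*$ evaluated on triples $x\otimes y\otimes z \in \g^{\otimes 3}$, using (iii) to extract the relevant degree. Uniqueness of $*$ given $\circ$ is immediate from the twisted analogue of Proposition \ref{expprop}, derived from (i)--(iv) in the same way as in \S\ref{starprodflasec}. The essential new ingredient compared to \S\ref{starprodthmsec} is that Stover's theorem, rather than a hypothesis on $\g$, supplies the graded PBW step; what remains is to verify that the pre-Lie axiom provides exactly the compatibility needed to lift the graded iso to a twisted coalgebra iso, a lift which, as Remark \ref{stcoalgrem} warns, does not exist for arbitrary connected twisted Lie algebras.
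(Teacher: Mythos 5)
Your proposal is correct and follows essentially the same route as the paper's sketch: construct $\Phi: T_\kk\widehat\g \to U_\SS\widehat\g$ inductively via the twisted analogue of \eqref{phiconstr}, invoke Stover's Theorem \ref{stthm} for the connected twisted Lie algebra $\Sigma\g[t]$ to guarantee $\gr\Phi$ is an isomorphism, reduce compatibility under $(n-1,n)$ to the pre-Lie axiom, inherit the twisted coalgebra property from \eqref{coalgmorph}, and settle condition \eqref{ncdegcond} by reverse induction. One small clarification of emphasis: since you are now working in the tensor algebra $T_\kk\widehat\g$ rather than a symmetric algebra, the inductive formula is automatically $\kk$-linearly well-defined (multilinearity in $x_1,\ldots,x_n$ is manifest); what the swap identity actually buys you is $\SS$-equivariance of $\Phi_{\leq n}$, which in the paper's framing is a separate check from well-definedness, though the underlying computation (and its reduction to \eqref{plax}) is the one you describe.
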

We remark that, using (ii) and (iii), condition (iv) is equivalent to
the condition that $t$ is twisted-central in $(T_\kk \widehat \g, *)$.
Furthermore, we need not mention (iv) or the words ``pre-Lie'' in the
theorem, if we say instead that star products satisfying (i)--(iii)
are equivalent to twisted Lie algebra structures on $\Sigma \g[t]$
over $\kk[t]$ (or twisted Poisson structures on $T_\kk\widehat \g$
over $\kk[t]$).
\begin{proof}[Sketch of proof]
  The proof is an adaptation of the proof of Theorem
  \ref{starprodthm}.  By a straightforward analogue of Proposition
  \ref{expprop}, it again suffices to show that $\circ$ gives rise to
  a star product $*$ satisfying (i)--(iv).
\begin{enumerate}
\item First, extend $\circ$ to an operation on $\widehat \g$ such that
  $t \circ x = 0 = x \circ t$ for all $x \in \widehat g$. With this,
  \eqref{ncdeg2} is valid for $x,y \in \widehat \g$, using (iv).
\item We inductively construct a \emph{twisted} coalgebra isomorphism
  $\Phi: T_\kk\widehat \g \iso U_{\SS} \widehat \g$, replacing
  \eqref{phiconstr} with
\begin{multline} \label{ncphiconstr}
  \Phi_{\leq n}(x_1 x_2 \cdots x_{n-1} x_n) 
  \\ = \Phi_{\leq n-1}(x_1 x_2 \ldots x_{n-1}) x_n - \Phi_{\leq n-1}
  \Bigl(\sum_{i=1}^n x_1 x_2 \cdots x_{i-1} (x_i \circ x_n) x_{i+1}
  \cdots x_{n-1} \Bigr)t.
\end{multline}
\item Since $\gr \Phi_{\leq n}$ is the twisted graded PBW isomorphism
  (the $\kk[t]$-analogue of Theorem \ref{stthm} for $\mfh = \Sigma
  \g[t]$, obtained from the graded PBW isomorphism for the latter by
  imposing $\kk[t]$-linearity), \eqref{ncphiconstr} certainly gives a
  well-defined $\kk$-linear isomorphism $T_\kk \widehat \g \iso
  U_{\SS} \widehat \g$, and it remains to check that this is actually
  a twisted coalgebra morphism.  We may again extend the star product
  to an operation $T^{\leq i} \widehat \g \otimes T^{\leq n-i}
  \widehat \g \rightarrow T^{\leq n} \widehat \g$ such that
  $\Phi_{\leq n}(a * b) = \Phi_{\leq n}(a) \cdot \Phi_{\leq n}(b)$.
\item To check that $\Phi_{\leq n}$ is an $\SS$-module morphism, it is
  enough by induction to show that $\Phi_{\leq n}$ commutes with
  $(n-1,n)$, or equivalently, that (similarly to \eqref{phiwelldef})
\begin{equation}
  (n-1,n) (x_1 \cdots x_{n-2} * x_{n}) * x_{n-1} = 
  ((x_1 \cdots x_{n-2}) * x_{n-1}) * x_n - (x_1 \cdots x_{n-2}) * \{x_{n-1}, x_n\}.
\end{equation}
As before, this follows by expanding $*$ using \eqref{ncphiconstr} and
the pre-Lie identities for $\circ$.
\item To check that $\Phi_{\leq n}$ is a twisted coalgebra morphism,
  we use \eqref{coalgmorph}.
\item Finally, \eqref{ncdegcond} is proved by replacing
  \eqref{degcondpf} with
\begin{equation}
(x_1 x_2 \cdots x_m) *(x_{m+1} * (x_{m+2} \cdots x_n)) \in (T^m_\kk
(\Sigma \g))(T^{n-m}_\kk \widehat \g). \qedhere
\end{equation}
\end{enumerate}
\end{proof}
\subsection{General proof of Theorem \ref{starprodthm}}
Here, we deduce Theorem \ref{starprodthm} from Theorem
\ref{ncstarprodthm}, thereby proving the former without any hypotheses
on $\g$ or $\kk$.
\begin{proof}[Proof of Theorem \ref{starprodthm}] We take
  $S_m$-coinvariants in each $\SS$-module degree $m$, and set $t$
  equal to $1$.  This transforms $\Sym_\SS \widehat{g}$ into $\Sym_\kk
  \g$ and $U_{\SS} \widehat{g}$ into $U_{\kk} \g$, where $\g$ is
  viewed as an ordinary Lie algebra with bracket $\{x, y\} = x \circ y
  - y \circ x$.  Thus, any pre-Lie multiplication on $\g$ gives rise
  to a star product satisfying the needed conditions.  Uniqueness and
  the converse follow from Proposition \ref{expprop}, as explained in
  \S \ref{starprodthmsec}.\footnote{Alternatively, since we now have
    the PBW theorem for pre-Lie algebras, one can use a careful
    version of the proof in \S \ref{starprodthmsec}, which can be
    modified to avoid the use of Proposition \ref{expprop} as pointed
    out in the footnote there.}
\end{proof}

\subsection{Twisted generalization of the main results}\label{twgensubsec}
If we had worked originally in the category of $\SS$-modules rather
than $\kk$-modules, everything we did generalizes. In particular,
Theorem \ref{starprodthm} generalizes to:
\begin{thm} \label{twstarprodthm} Let $\g$ be any $\SS$-module.  Star
  products $*$ on $\Sym_{\SS} \mathfrak{g}$ satisfying the conditions
\begin{enumerate}
\item[(i)] $*$ forms a bialgebra with the usual twisted coproduct $\Delta$,
\item[(ii)] $*$ is a filtered product whose associated graded is the
usual product on $\Sym \g$,
\item[(iii)] $*$ satisfies 
\begin{equation} \label{twdegcond}
(\Sym^{m}_{\SS} \g) * \Sym_{\SS} \g \subseteq (\Sym_{\SS}^{m} \g) 
(\Sym_{\SS} \g) = \Sym^{\geq m}_{\SS} \g,
\end{equation}
\end{enumerate}
are equivalent to (right) twisted pre-Lie algebra structures $\circ:
\g \otimes \g \rightarrow \g$, under the correspondence
\begin{equation} \label{twdeg2}
x * y = xy + x \circ y, \quad \forall x, y \in \g.
\end{equation}
\end{thm}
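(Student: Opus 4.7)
The plan is to mirror the passage from Theorem \ref{ncstarprodthm} to Theorem \ref{starprodthm}, but carried out one categorical level higher, with $\SS$-modules playing the role of $\kk$-modules throughout. First I would establish the $\SS$-module analogue of Proposition \ref{expprop}: given a star product $*$ satisfying (i)--(iii), the formula $g \circ h := \pi_n(g * h)$ for $g \in \Sym^n_\SS \g$ (with $\pi_n$ now denoting the projection to $\Sym^n_\SS \g$) defines a binary operation $\g \otimes_\SS \g \to \g$, and the twisted bialgebra axiom combined with associativity of $*$ yields the twisted analogues of identities (0)--(3) of \S \ref{starprodflasec}, with the braidings from \eqref{betadefn} inserted at the appropriate places. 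The twisted pre-Lie identity \eqref{twplax} for $\circ$ then drops out by substituting \eqref{twdeg2} into $(x * y) * z = x * (y * z)$ and projecting to $\g$; the factor $(23)^{|x|,|z|,|y|}$ is exactly the braiding needed to swap the last two tensor factors. This step simultaneously gives uniqueness of $*$ in terms of $\circ$, so the content of the theorem is the reverse direction: constructing $*$ from a twisted pre-Lie structure.

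For existence, the natural route is to replay the argument of \S \ref{ncgensubsec} entirely within $\SS$-modules rather than transferring from it. Introducing a degree-one parameter $t$ and forming a suitable $\widehat \g$ (in $\SS$-modules, via the extended suspension alluded to in \S \ref{twgensubsec}, or if necessary in $\SS$-bimodules), I would build a twisted coalgebra isomorphism $\Phi : T_\SS \widehat \g \iso U_\SS \widehat \g$ inductively by the verbatim analogue of the recursion \eqref{ncphiconstr}, verify well-definedness by the twisted counterpart of \eqref{phiwelldef} (which again reduces to the twisted pre-Lie identity \eqref{twplax}), verify the coalgebra property exactly as in \eqref{coalgmorph}, and check condition \eqref{twdegcond} by the $\SS$-analogue of \eqref{degcondpf}. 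Passing to the twisted-commutative quotient and specializing $t \mapsto 1$ then produces the required $*$ on $\Sym_\SS \g$ satisfying (i)--(iii), with \eqref{twdeg2} holding by construction.

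The main obstacle is ensuring that the graded PBW input is available in this generality. Stover's Theorem \ref{stthm} applies only to \emph{connected} twisted Lie algebras, and an arbitrary twisted pre-Lie $\g$ may have nontrivial degree-zero part, so its associated twisted Lie algebra need not be connected. The suspension $\Sigma \g[t]$ is precisely the device that resolves this, since it is concentrated in positive degrees; this is exactly why the argument must pass through the noncommutative twisted analogue rather than attempt a direct adaptation of the one-page proof of \S \ref{starprodthmsec}. A secondary subtlety, flagged in the caution following Theorem \ref{stthm}, is that the twisted graded PBW isomorphism does not in general lift to a coalgebra isomorphism $\Sym_\SS \mfh \iso U_\SS \mfh$, so the lift $\Phi$ must be constructed by hand degree by degree rather than quoted abstractly; the pre-Lie identity is exactly what guarantees the required $S_n$-equivariance at each inductive step. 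Once this bookkeeping is in place, all remaining verifications are formal transcriptions of the arguments already given for Theorems \ref{starprodthm} and \ref{ncstarprodthm}.
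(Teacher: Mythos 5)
Your proposal follows essentially the same route as the paper: deduce uniqueness and the forward direction from an $\SS$-module analogue of Proposition \ref{expprop}, then obtain existence by working with the extended suspension $\Sigma\g[t]$ (as in \S\ref{gensuspsec} or, equivalently, in $\SS$-bimodules as in \S\ref{catgensec}), applying Stover's PBW theorem (which requires exactly the connectedness that the suspension supplies), constructing the coalgebra lift $\Phi$ degree by degree via the analogue of \eqref{ncphiconstr} (this is Theorem \ref{nctwstarprodthm}), and then passing back to $\Sym_\SS\g$ by taking coinvariants and setting $t=1$. One small notational slip: you write $T_\SS\widehat\g$, but the relevant object is $\Sym_\SS\widehat\g=\Sym_{\SS,\kk[t]}(\Sigma\g[t])$ --- in the $\SS$-module case $\widehat\g$ is no longer concentrated in degree one, so the identification $\Sym_\SS\widehat\g\cong T_\kk\widehat\g$ used in \S\ref{ncgensubsec} does not persist, and $T_\SS\widehat\g$ would denote the free twisted \emph{associative} algebra, which is the wrong object.
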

In this context, Proposition \ref{expprop} goes through in exactly the
same way, yielding the formulas
\begin{enumerate}
\item[(0)] $a \circ 1 = a$,
\item[(1)] $a \circ (bx) = (a \circ b) \circ x - a \circ (b \circ x),
  \quad \forall a,b \in \Sym \g, x \in \g$,
\item[(2)] $(ab) \circ c = (23)^{|a|,|c'|,|b|,|c''|} (a \circ c') (b
  \circ c'')$,
\item[(3)] $a * b = (a \circ b') b''$.
\end{enumerate}
As before, we deduce the twisted graded pre-Lie PBW theorem:
\begin{cor} \label{twplpbw} If $\g$ is a twisted pre-Lie algebra, and
  $U_\SS \g$ the universal enveloping algebra of the associated
  twisted Lie algebra, then
\begin{enumerate}
\item The canonical morphism $\Sym_{\SS} \g \rightarrow \gr (U_\SS
  \g)$ is an isomorphism;
\item The canonical morphism lifts to a twisted coalgebra isomorphism
  $\Sym_{\SS} \g \iso U_\SS \g$.
\end{enumerate}'
\end{cor}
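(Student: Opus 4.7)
The plan is to extract from Theorem~\ref{twstarprodthm} a twisted bialgebra isomorphism $U_\SS \g \iso (\Sym_\SS \g, \Delta, *)$, from which both parts of the corollary will fall out by passing to associated graded. Given the twisted pre-Lie structure $\circ$ on $\g$, Theorem~\ref{twstarprodthm} yields a star product $*$ on $\Sym_\SS \g$ satisfying (i)--(iii). From \eqref{twdeg2} and \eqref{twss}, one checks immediately that $x * y - (12)^{|y|,|x|}\, y * x = \{x,y\}$ for all $x, y \in \g$, so the inclusion $\iota: \g \into (\Sym_\SS \g, *)$ is a morphism of twisted Lie algebras.

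By the universal property of the twisted enveloping algebra, $\iota$ extends uniquely to a morphism of twisted associative algebras $\alpha: U_\SS \g \to (\Sym_\SS \g, *)$. The twisted coproducts on both sides are determined by being morphisms of twisted algebras together with $\Delta(x) = 1 \otimes x + x \otimes 1$ for $x \in \g$: this compatibility is automatic on $U_\SS \g$ and is precisely condition (i) on the right. Hence $\alpha$ is a morphism of twisted bialgebras, and it is filtered with $\alpha(F_n U_\SS \g) \subseteq \Sym_\SS^{\leq n} \g$.

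The crux is then to identify $\gr \alpha$ with the inverse of the canonical morphism $c: \Sym_\SS \g \to \gr U_\SS \g$. By condition (ii), $\gr \alpha$ sends the class of $x_1 \cdots x_n$ in $\gr_n U_\SS \g$ to the commutative product $x_1 \cdot x_2 \cdots x_n$ in $\Sym_\SS^n \g$, so $\gr \alpha \circ c = \Id$. Since $c$ is surjective (its image generates $\gr U_\SS \g$), this identity forces $c$ to be an isomorphism, proving part (i). Applying the filtered-to-graded principle in the abelian category of $\SS$-modules then promotes $\alpha$ to an isomorphism of twisted bialgebras, so that $\alpha^{-1}: \Sym_\SS \g \iso U_\SS \g$ is a twisted coalgebra isomorphism whose associated graded is $c$, proving part (ii).

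The main point requiring care is the filtered-to-graded principle in the $\SS$-module category: a filtered morphism inducing an isomorphism on associated graded must itself be an isomorphism. Degree by degree in the $\SS$-grading this reduces to the classical statement for $\kk$-modules, once one observes that the filtration on $(U_\SS \g)_m$ — although possibly not stabilizing at finite length when $\g_0 \neq 0$ — is exhaustive and compatible with the $S_m$-action. This is routine, but worth verifying explicitly given the mixing between the $\SS$-degree and the filtration degree.
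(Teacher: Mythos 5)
Your proof is correct, and it takes a slightly different route from the one the paper has in mind. The paper's argument (following the template of \S\ref{starprodthmsec} and, in full generality, Theorem~\ref{nctwstarprodthm}) constructs the coalgebra isomorphism $\Phi: \Sym_\SS \g \to U_\SS \g$ directly, by an explicit induction on degree, simultaneously with the star product; the corollary then falls out as a byproduct of the construction. You instead take the star product from Theorem~\ref{twstarprodthm} as given and run the universal-property argument in the opposite direction: since $\iota: \g \hookrightarrow (\Sym_\SS\g, *)$ is a morphism of twisted Lie algebras, you obtain $\alpha: U_\SS\g \to (\Sym_\SS\g, *)$ for free, you check it is a morphism of twisted bialgebras (both coproducts are algebra maps agreeing on $\g$, which generates $U_\SS\g$), and you finish with the filtered-to-graded five-lemma once $\gr\alpha\circ c = \Id$ and surjectivity of $c$ are in hand. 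This is cleaner and separates the corollary from the theorem's proof. Two things deserve emphasis, both of which you flag or implicitly handle: the filtration on $(U_\SS\g)_m$ need not terminate when $\g_0\neq 0$, so the five-lemma argument must be run in the colimit over the (exhaustive) filtration degree rather than expecting it to stabilize; and there is no circularity in invoking Theorem~\ref{twstarprodthm} to deduce part (i), since the paper establishes that theorem in full generality via the $\kk[t]$-analogue (Theorem~\ref{nctwstarprodthm}) applied to $\Sigma\g[t]$, without presupposing the graded PBW isomorphism for $\g$.
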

In fact, the theorem generalizes to the case where $\g$ is a pre-Lie
algebra in an \emph{arbitrary} abelian symmetric monoidal category
with limits, as we will explain in \S \ref{catgensec}.

To prove the theorem, we once again need to be given that $\Sym_{\SS}
\g \rightarrow \gr (U_\SS \g)$ is an isomorphism: this is Stover's
result if $\g$ is connected (or more generally if $\g_0$ is a free
$\kk$-module, cf.~Theorem \ref{catpbw}.(iii)).  In the general
setting, we can again circumvent the need for this assumption by
proving a $\kk[t]$-analogue of the above (generalizing Theorem
\ref{ncstarprodthm}) and applying it to $\Sigma \g[t]$.

However, \`a priori, $\Sigma \g[t]$ is in the category of
\emph{$\SS$-bimodules}, i.e., \emph{bigraded} vector spaces with
actions of $S_m \times S_n$ in bidegree $(m,n)$: the second grading
comes from the suspension. The twisted graded PBW theorem (Theorem
\ref{stthm}) generalizes to connected $\SS$-bimodules. However, these
arguments seem to be just as clear when one replaces $\SS$-modules by
an arbitrary symmetric monoidal category $\mathcal{C}$, and
$\SS$-bimodules by the category $\SS_{\mathcal{C}}$ of symmetric
sequences of objects of $\mathcal{C}$, as we will do in \S
\ref{catgensec} below.

There is an alternative approach that remains in the category of
$\SS$-modules: to define the suspension $\Sigma$ as a functor from
$\SS$-modules to itself, raising degrees by one.  We then obtain an
equivalence between twisted pre-Lie algebras $\g$ and certain twisted
Lie algebra structures on $\Sigma \g[t]$.  We carry this through in
the next section, which implies a ``noncommutative'' (better,
``suspended'') generalization, Theorem \ref{nctwstarprodthm}, of the
above.

\subsection{Suspensions of arbitrary twisted pre-Lie
  algebras}
\label{gensuspsec} Here, we explain how to extend the suspension
$\Sigma$ as an operation on $\SS$-modules, which allows us to
generalize Proposition \ref{pltwlie} and Theorem \ref{ncstarprodthm},
and also gives one way to prove the results of the previous section,
remaining in the category of $\SS$-modules rather than
$\SS$-bimodules.

The main idea is to interpret $\Sigma \g$, when $\g$ is a
$\kk$-module, as $\Ind_{S_0 \times S_1}^{S_1} V$, where the
$\kk$-module $\g$ is viewed also as an $\SS$-module concentrated in
degree zero.  Then, given an arbitrary $\SS$-module $\g$, define the
$\SS$-module $\Sigma \g$ by
\begin{equation}
(\Sigma \g)_0 = 0, \quad (\Sigma \g)_{m+1} 
:= \Ind_{S_m \times S_1}^{S_{m+1}} \g_m.
\end{equation}
We will use the decomposition as $\kk$-modules,
\begin{equation}
(\Sigma \g)_{m+1} = \g_m \oplus \bigoplus_{i=1}^m (i, i+1, \ldots, m+1) \g_m.
\end{equation}
Given an element $x \in \g_m$, let 
\begin{equation}
  x^{(0)} := x \in (\Sigma \g)_{m+1}, 
  \quad x^{(i)} := (i, i+1, \ldots, m+1) x \in (\Sigma \g)_{m+1}.
\end{equation}
Heuristically, think of the element $x^{(0)}$ as ``$xs$'' where we
multiply $x$ by a parameter $s$, with $|s|=1$.  Similarly, $x^{(1)}$
can be thought of as ``$sx$'', and $x^{(i)}$ can be thought of as
obtained from $x$ by ``inserting an $s$ between the $(i-1)$-th and
$i$-th $S_m$-module components,'' for $2 \leq i \leq m$.

Now, we associate to any binary operation $\circ$ on $\g$ a twisted
skew-symmetric bracket on $\Sigma \g[t] := \Sigma \g \otimes_{\SS}
\kk[t]$, such that, heuristically, $\{x^{(i)}, y^{(j)}\}$ contains an
$s$ and a $t$ in the first and second places where an $s$ appears in
$x^{(i)} \otimes y^{(j)}$, i.e., in the $i$-th and $(|x|+1+j)$-th
components in the case $i,j \geq 1$. We state this precisely for
$i=j=0$ by
\begin{equation}\label{twbrpleq}
  \{x^{(0)}, y^{(0)}\} = (x \circ y)^{(|x|)} t - 
  (12)^{|y|+1,|x|+1} (y \circ x)^{(|y|)} t,
\end{equation}
which heuristically may be written as
\begin{equation}
\{x s, y s\} = (x \mathop{\circ}^{s} y) t - 
(12)^{|y|+1,|x|+1} (y \mathop{\circ}^{s} x)t.
\end{equation}
Another motivation for these formulas is that they describe the
quasiclassical limit of star products of the form \eqref{nctwdeg2} we
consider below (generalizing \eqref{ncdeg2}).

In the original situation where $\g$ is concentrated in degree zero,
\eqref{twbrpleq} reduces to \eqref{brpleq}. More generally,
Proposition \ref{pltwlie} extends to
\begin{prop}
  \label{twpltwlie} The bracket $\{\,, \}$ of \eqref{twbrpleq} defines
  a twisted Lie algebra structure on $\Sigma \g[t]$ over $\kk[t]$ if
  and only if $\circ$ defines a twisted pre-Lie algebra structure on
  $\g$. In this case, $\Sigma \g[t]$ is itself a twisted pre-Lie
  algebra, under the $\kk[t]$-linear operation
  \begin{equation} \label{plsuspfla} 
    x^{(0)} \circ y^{(0)} := (x \circ y)^{(|x|)}t, 
    \quad t \circ u = 0 = u \circ t.
  \end{equation} 
\end{prop}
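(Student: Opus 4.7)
My plan is to adapt the proof of Proposition \ref{pltwlie} to this suspended setting, reducing all checks to the generating elements $x^{(0)}, y^{(0)}, z^{(0)}$ by $\SS$-equivariance and $\kk[t]$-linearity. Since $(\Sigma\g)_{m+1} = \Ind_{S_m \times S_1}^{S_{m+1}} \g_m$ is generated as an $S_{m+1}$-module by $\{x^{(0)} : x \in \g_m\}$, any $\SS$-bilinear, $\kk[t]$-linear operation on $\Sigma\g[t]$ is determined by its restriction to pairs of generators $x^{(0)} \otimes y^{(0)}$. So \eqref{twbrpleq} unambiguously defines a map $\Sigma\g[t] \otimes_\SS \Sigma\g[t] \to \Sigma\g[t]$; it remains to verify that it satisfies \eqref{twss} and \eqref{twjac} precisely when $\circ$ satisfies \eqref{twplax}.

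First I would check twisted skew-symmetry: this is immediate from \eqref{twbrpleq} using the involutivity $(12)^{|y|+1,|x|+1} (12)^{|x|+1,|y|+1} = 1$, so that applying $-(12)^{|y|+1,|x|+1}$ to $\{y^{(0)}, x^{(0)}\}$ recovers $\{x^{(0)}, y^{(0)}\}$. The main work is the equivalence between the twisted Jacobi identity \eqref{twjac} and the twisted pre-Lie axiom \eqref{twplax}. By equivariance and $\kk[t]$-linearity it suffices to evaluate \eqref{twjac} on $x^{(0)}, y^{(0)}, z^{(0)}$. To expand each cyclic term I first write $\{y^{(0)}, z^{(0)}\} = (y \circ z)^{(|y|)} t - (12)^{|z|+1,|y|+1}(z \circ y)^{(|z|)} t$, then use $\SS$-equivariance to transport $(y \circ z)^{(|y|)}$ back to $(y \circ z)^{(0)}$ via the cycle $(|y|, \ldots, |y|+|z|+1)$ before applying \eqref{twbrpleq} to $\{x^{(0)}, (y \circ z)^{(0)}\}$. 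The resulting sum naturally decomposes by the positions occupied by the two $t$-factors in the target, producing three components; after absorbing the prefactors $(123)^{|y|,|z|,|x|}$ and $(132)^{|z|,|x|,|y|}$ in \eqref{twjac}, each component is exactly one of the two sides of \eqref{twplax} applied to an appropriate ordering of $x, y, z$, and the full identity holds iff all three components vanish, iff $\circ$ is twisted pre-Lie.

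For the final assertion, I would extend \eqref{plsuspfla} $\SS$-equivariantly and $\kk[t]$-linearly to a binary operation on $\Sigma \g[t]$. Its twisted commutator on generators is $x^{(0)} \circ y^{(0)} - (12)^{|y|+1,|x|+1} y^{(0)} \circ x^{(0)} = (x\circ y)^{(|x|)}t - (12)^{|y|+1,|x|+1}(y\circ x)^{(|y|)}t$, which matches \eqref{twbrpleq} on the nose. It then remains to verify the twisted pre-Lie axiom \eqref{twplax} for this $\circ$ on $\Sigma\g[t]$; again this reduces to checking on generators, and after expanding via \eqref{plsuspfla} and transporting intermediate elements back to generators by $\SS$-equivariance, both $(x^{(0)} \circ y^{(0)}) \circ z^{(0)} - x^{(0)} \circ (y^{(0)} \circ z^{(0)})$ and its $y \leftrightarrow z$ counterpart reduce to the corresponding sides of \eqref{twplax} for $\g$, multiplied by a common $t$-factor. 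The hard part throughout is the bookkeeping: the various cyclic permutations $(i,i+1,\ldots,m+1)$ coming from the suspension, the block permutations $(12)^{\cdot,\cdot}$ and $(23)^{\cdot,\cdot,\cdot,\cdot}$ from the twisted operad structure, and the $S_m$-action on $\g_m$ must be tracked carefully and shown to conspire to give precisely the symmetrization displayed in \eqref{twplax}.
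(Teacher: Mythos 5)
Your proposal is correct and follows essentially the same route as the paper's proof: reduce the twisted Jacobi identity to the generating triples $x^{(0)}, y^{(0)}, z^{(0)}$ by $\SS$-equivariance and $\kk[t]$-linearity, expand the iterated brackets via \eqref{twbrpleq}, and match the resulting grouping (the paper organizes by cyclic permutation of $(x,y,z)$ with a leading $(34)^{|x|+1,|y|,|z|,1,1}$ factor; you organize by $t$-positions, which is the same decomposition used in Proposition \ref{pltwlie} and amounts to the same thing) against the twisted pre-Lie defect; then verify \eqref{plsuspfla} by the same transport-to-generators bookkeeping. Your explicit check that the twisted commutator of \eqref{plsuspfla} recovers \eqref{twbrpleq} is a small but welcome addition that the paper leaves implicit.
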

In the proposition, $\kk[t]$-linearity means that $(ut) \circ v = u
\circ (tv)$, $t(u \circ v) = (tu) \circ v$, and $(u \circ v) t = u
\circ (vt)$, for all $u,v$.
\begin{proof}
  We need to show that, given a binary operation $\circ$ on $\g$ and a
  bracket $\{\,,\}$ on $\Sigma \g \otimes_{\SS} \kk[t]$ satisfying
  \eqref{twbrpleq}, the twisted pre-Lie identity \eqref{twplax} for
  $\g$ is equivalent to the twisted Jacobi identity \eqref{twjac} for
  $\Sigma \g \otimes_{\SS} \kk[t]$.  It suffices to consider the
  twisted Jacobi identity for elements of the form $x^{(0)}, y^{(0)},
  z^{(0)}$, where $x, y, z$ are homogeneous elements of $\g$.  Then,
\begin{multline} 
  \{x^{(0)}, \{y^{(0)}, z^{(0)}\} \} + (132)^{|z|+1,|x|+1,|y|+1}
  \{z^{(0)}, \{x^{(0)}, y^{(0)}\}\}
  + (123)^{|y|+1,|z|+1,|x|+1} \{y^{(0)}, \{z^{(0)}, x^{(0)}\} \} \\
  = (34)^{|x|+1,|y|,|z|,1,1} \bigl( x \circ (y \circ z) - (x \circ y) \circ z
 - (23)^{|x|,|z|,|y|}(x \circ
  (z \circ y) - (x \circ z) \circ y)
  \bigr)^{(|x|)} t t + \ldots,
\end{multline}
where $\ldots$ are the terms obtained from the given ones by replacing
$(x,y,z)$ with $(z,x,y)$ and applying $(132)^{|z|+1,|x|+1,|y|+1}$, and
by performing this operation twice.  The equivalence of the
proposition follows immediately from this formula.

To show that \eqref{plsuspfla} is a pre-Lie structure is an explicit
verification: it suffices to consider the pre-Lie identity for a
triple of elements $x^{(0)},y^{(0)},z^{(0)} \in \Sigma \g$ obtained
from $x, y, z \in \g$, and then the identity follows from the pre-Lie
identity for the triple $x,y,z$ with a little bit of extra
bookkeeping. In more detail, $x^{(0)} \circ (y^{(0)} \circ z^{(0)}) =
(34)^{|x|+1,|y|,|z|,1,1} (x \circ (y \circ z))^{(|x|)} t t$, and
similarly the other three terms of the pre-Lie identity for $(x^{(0)},
y^{(0)}, z^{(0)})$ can be obtained from the corresponding terms of the
identity for $(x,y,z)$ by the map $T \mapsto (34)^{|x|+1,|y|,|z|,1,1}
T^{(x)} t t$.
\end{proof}

Finally, as before, Theorem \ref{twstarprodthm} has the following
stronger ``noncommutative'' analogue.  Let us again use the notation
\eqref{ghatdefn} and consider the algebras
\begin{equation}
  \Sym_{\SS} \widehat \g = \Sym_{\SS,\kk[t]} (\Sigma \g[t]), 
\quad U_{\SS} \widehat \g = U_{\SS,\kk[t]} (\Sigma \g[t]).
\end{equation}
Also, let $(\Sym_{\SS} \Sigma \g) \subset \Sym_{\SS}
\widehat \g$ be the twisted commutative subalgebra generated by
$\Sigma \g \subset \widehat \g$.
\begin{thm} \label{nctwstarprodthm}
Let $\g$ be any $\SS$-module. 
 Star products $*$ on $\Sym_{\SS} \widehat \g$  satisfying the conditions
\begin{enumerate}
\item[(i)] $*$ forms a twisted bialgebra with the usual twisted
  coproduct $\Delta$,
\item[(ii)] $*$ is a filtered product whose associated graded is the
  usual product on $\Sym_{\SS} \widehat \g$,
\item[(iii)] $*$ satisfies 
  \begin{equation} \label{nctwdegcond} \Sym_{\SS}^m (\Sigma \g) *
    \Sym_{\SS} \widehat \g \subseteq \Sym_{\SS}^m (\Sigma \g)
    (\Sym_{\SS} \widehat \g),
\end{equation}
\item[(iv)] $f * t = ft$ and $t * f = tf$ for all
  $f \in \Sym_{\SS} \widehat \g$,
\end{enumerate}
are equivalent to (right) twisted pre-Lie algebra structures $\circ:
\g \otimes \g \rightarrow \g$, under the correspondence
\begin{equation} \label{nctwdeg2} 
  x^{(0)} * y^{(0)} = x^{(0)} y^{(0)}
  + (x \circ y)^{(|x|)} t, \quad \forall x, y \in \g.
\end{equation}
\end{thm}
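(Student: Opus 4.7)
The plan is to adapt the proof of Theorem \ref{ncstarprodthm} structurally verbatim, but now with all tensor products replaced by $\otimes_{\SS}$, with permutations of factors accompanied by the braidings of \eqref{betadefn}, and with Proposition \ref{pltwlie} replaced by its generalization Proposition \ref{twpltwlie}. Under this generalization, a twisted pre-Lie structure $\circ$ on $\g$ corresponds to a twisted Lie algebra structure on the connected $\SS$-module $\Sigma \g[t]$ over $\kk[t]$, and the target object $U_{\SS} \widehat \g = U_{\SS,\kk[t]} \Sigma \g[t]$ is governed by Stover's graded PBW theorem (Theorem \ref{stthm}) applied to the connected twisted Lie algebra $\Sigma \g[t]$. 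That theorem supplies, as an $\SS$-module/twisted bialgebra isomorphism, the canonical graded identification $\Sym_{\SS} \widehat \g \iso \gr U_{\SS} \widehat \g$, which is the key input that made the argument work unconditionally in \S \ref{ncgensubsec}.

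\textbf{The easy direction and uniqueness.} I would first establish a twisted analogue of Proposition \ref{expprop}: given a star product $*$ satisfying (i)--(iv), define $\circ$ on $\g$ by reading off the coefficient of $(\Sigma \g)^{(|x|)} t$ in $x^{(0)} * y^{(0)}$ (cf.~\eqref{nctwdeg2}). The formulas (0)--(3) of \S \ref{starprodflasec} carry over in their twisted form, with the braiding $(23)^{|a|,|c'|,|b|,|c''|}$ of Theorem \ref{twstarprodthm}(2) inserted; the verification is the same line-by-line computation as in the proof of Proposition \ref{expprop}. Specializing the twisted analogue of formula (1) to $a,b,x \in \Sigma \g$ together with associativity of $*$ then forces $\circ$ to satisfy the twisted pre-Lie axiom \eqref{twplax}. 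This direction also yields uniqueness: properties (i)--(iv) determine $*$ in each $\SS$-module degree inductively from $\circ$.

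\textbf{Existence via an inductive twisted coalgebra lift.} For the converse, I would extend $\circ$ to $\widehat \g$ by declaring $x \circ t = 0 = t \circ x$, so that \eqref{nctwdeg2} makes sense on all of $\widehat \g$, and then inductively build a twisted coalgebra isomorphism $\Phi: \Sym_{\SS} \widehat \g \iso U_{\SS} \widehat \g$ by the formula \eqref{ncphiconstr}, now read in the $\SS$-module setting with $x_i \in \widehat \g$. Because Stover's theorem guarantees that $\gr \Phi$ is the twisted PBW isomorphism (applied to the connected $\Sigma \g[t]$), formula \eqref{ncphiconstr} automatically extends $\Phi_{\leq n-1}$ to a $\kk$-linear isomorphism in degree $n$; what remains to verify inductively are three structural properties: $S_n$-equivariance (so that the right-hand side of \eqref{ncphiconstr} descends to $\Sym_{\SS}^n \widehat \g$), the twisted coalgebra morphism property, and the degree condition \eqref{nctwdegcond}.

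\textbf{Main obstacle.} The delicate step is the $S_n$-equivariance check. By induction one may assume invariance under $S_{n-1}$ acting on the first $n-1$ factors, and one is reduced to equivariance under the transposition interchanging the last two $\SS$-blocks of degrees $|x_{n-1}|$ and $|x_n|$. Expanding both sides of the resulting identity via \eqref{ncphiconstr} and the twisted bialgebra compatibility (i.e.\ the twisted version of formula (2) of Proposition \ref{expprop}, which inserts the braiding $(23)^{|a|,|c'|,|b|,|c''|}$) reduces the task to the twisted analogue of \eqref{phiwelldef}, which in turn follows from the twisted pre-Lie identity \eqref{twplax} after careful bookkeeping of the braidings arising from \eqref{twcoprodfla}. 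The coalgebra morphism property is a direct transcription of \eqref{coalgmorph} into the twisted setting, and \eqref{nctwdegcond} is proved by reverse induction on the position of the first $\Sigma \g$-factor, exactly as in the last step of the proof of Theorem \ref{ncstarprodthm}.
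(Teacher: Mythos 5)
Your proposal is correct and follows essentially the same route as the paper's sketch: reduce to existence via a twisted analogue of Proposition \ref{expprop}, invoke Proposition \ref{twpltwlie} together with Stover's Theorem \ref{stthm} for the connected $\Sigma\g[t]$ to get the graded PBW isomorphism, then build the twisted coalgebra lift $\Phi$ inductively and verify $S_n$-equivariance, the coalgebra property, and the degree condition. The one place where you are a bit less precise than the paper is the inductive formula: you cite \eqref{ncphiconstr} ``read in the $\SS$-module setting,'' whereas the paper replaces it with the explicitly corrected \eqref{twphiconstr}, whose permutations $\sigma_i$ do the bookkeeping you refer to (moving the absorbed $t$ to the $(|x_1|+\cdots+|x_i|)$-th slot rather than leaving it at the end).
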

This theorem can be proved using only Theorem \ref{stthm} and
Proposition \ref{twpltwlie} (see the sketch below), and implies all of
the other results of Sections $1$ through $4$.  In particular, one may
use it to deduce the results of the previous subsection without using
$\SS$-bimodules.  Also, the remarks following Theorem
\ref{ncstarprodthm} (allowing us to replace the condition (iv) and the
words ``pre-Lie''), apply here as well.
\begin{proof}[Sketch of proof]
  The proof is again an adaptation of the proof of Theorem
  \ref{starprodthm}.  By a straightforward analogue of Proposition
  \ref{expprop} using \eqref{twbrpleq}, it once again suffices to show
  that $\circ$ gives rise to a star product $*$ satisfying (i)--(iv).
\begin{enumerate}
\item Thanks to (iv) and the pre-Lie structure \eqref{plsuspfla} on
  $\Sigma\g[t]$, \eqref{nctwdeg2} becomes
\begin{equation}
u * v = u v + u \circ v,
\end{equation}
valid for all $u, v \in \widehat \g$. 
\item We inductively construct a \emph{twisted} coalgebra isomorphism
  $\Phi: \Sym_{\SS} \widehat \g \iso U_{\SS} \widehat \g$, replacing
  \eqref{phiconstr} with
  \begin{multline} \label{twphiconstr}
    \Phi_{\leq n}(x_1 x_2 \cdots x_{n-1} x_n) \\
    = \Phi_{\leq n-1}(x_1 x_2 \ldots x_{n-1}) x_n - \Phi_{\leq n-1}
    \Bigl(\sum_{i=1}^n \sigma_i \cdot x_1 x_2 \cdots x_{i-1} (x_i
    \circ x_n) x_{i+1} \cdots x_{n-1} \Bigr),
\end{multline}
using the permutation
\begin{equation}
  \sigma_i :=  
  (i+1,n,n-1,n-2,\ldots,i)^{|x_1|,|x_2|,\ldots,|x_i|,|x_n|,
    |x_{i+1}|,x_{i+2}|,\ldots,|x_{n-1}|}.
\end{equation}
\item Since $\gr \Phi_{\leq n}$ is the twisted graded PBW isomorphism
  (the $\kk[t]$-linear quotient of the graded PBW isomorphism of
  Theorem \ref{stthm} for $\mfh = \Sigma \g[t]$), \eqref{twphiconstr}
  certainly gives a well-defined $\kk$-linear isomorphism $\Sym_{\SS}
  \widehat \g \iso U_{\SS} \widehat \g$, and it remains to check that
  this is actually a twisted coalgebra morphism.  We may again extend
  the star product to an operation $\Sym_{\SS}^{\leq i} \widehat \g
  \otimes \Sym_{\SS}^{\leq n-i} \widehat \g \rightarrow
  \Sym_{\SS}^{\leq n} \widehat \g$ such that $\Phi_{\leq n}(a * b) =
  \Phi_{\leq n}(a) \cdot \Phi_{\leq n}(b)$.
\item To check that $\Phi_{\leq n}$ is an $\SS$-module morphism, it is enough by induction to show that 
(similarly to \eqref{phiwelldef})
\begin{multline}
  (n-1,n)^{|x_1|, \ldots, |x_{n-2}|,|x_{n}|,|x_{n-1}|} (x_1 \cdots
  x_{n-2} * x_{n}) * x_{n-1} \\ = ((x_1 \cdots x_{n-2}) * x_{n-1}) *
  x_n - (x_1 \cdots x_{n-2}) * \{x_{n-1}, x_n\}.
\end{multline}
As before, this follows by expanding $*$ using \eqref{twphiconstr} and
the pre-Lie identities for $\circ$.
\item To check that $\Phi_{\leq n}$ is a twisted coalgebra morphism, we use \eqref{coalgmorph}.
\item Finally, \eqref{twdegcond} is proved by replacing
  \eqref{degcondpf} with
\begin{equation}
  (x_1 x_2 \cdots x_m) *(x_{m+1} * (x_{m+2} \cdots x_n)) \in (\Sym^m_{\SS}
  (\Sigma \g))(\Sym^{n-m}_{\SS} \widehat \g). \qedhere
\end{equation}
\end{enumerate}
\end{proof}

\subsection{Categorical generalization of the main results}\label{catgensec}
In fact, there is nothing in Theorem \ref{twstarprodthm} that requires
the category of $\SS$-modules.  Take any abelian symmetric monoidal
category $\mathcal{C}$ (see, e.g., \cite{JSbmc}) with braiding $\beta$
and arbitrary limits. Denote the monoidal product by $\otimes$: we
will omit any subscripts of $\mathcal{C}$ in this section. Take any
Lie algebra $(\g, \{\,,\})$ in the category $\mathcal{C}$: that is, an
object in $\mathcal C$ equipped with a morphism $\{\,,\}: \g \otimes
\g \rightarrow \g$ which is skew-symmetric and satisfies the Jacobi
identity (using $\beta$ to express both).

One has symmetric and universal enveloping algebras $\Sym \g$ and $U
\g$ in the category $\mathcal{C}$, and a natural epimorphism
\begin{equation} \label{pbwonto} \Sym \g \onto \gr U \g.
\end{equation}
One similarly has the notion of pre-Lie algebras, and one can consider
filtered associative star products $*: \Sym \g \otimes \Sym \g
\rightarrow \Sym \g$ in $\mathcal C$ whose associated graded is the
morphism \eqref{pbwonto}. Moreover, $\Sym \g$ is naturally equipped
with the structure of a bialgebra in $\mathcal{C}$: let $I$ be the
identity object of $\mathcal{C}$ with respect to tensor product.  The
coproduct
\begin{equation}
\Delta: \Sym \g \rightarrow \Sym \g \otimes \Sym \g
\end{equation}
is then defined by uniquely extending the canonical morphism $\g
\rightarrow ((I \otimes \g) \oplus (\g \otimes I))$ so as to obtain a
bialgebra. We will be interested in viewing $\Sym \g$ as a filtered
coalgebra, and equipping it with a new associative structure $*$ whose
associated graded is the original bialgebra structure on $\Sym \g$.

In this context, Theorem \ref{twstarprodthm} goes through without
change:
\begin{thm}\label{catstarprodthm}
  Pre-Lie algebras in $\mathcal{C}$ are equivalent to star-product
  algebra structures on $\Sym \g$ satisfying
\begin{enumerate}
\item[(i)] $*$ forms a bialgebra in $\mathcal{C}$ 
with the usual coproduct $\Delta$,
\item[(ii)] $*$ is a filtered product whose associated graded is the
usual product on $\Sym \g$,
\item[(iii)] $*$ satisfies 
\begin{equation} 
  (\Sym^{m} \g) * \Sym \g \subseteq (\Sym^{m} \g) (\Sym \g) = \Sym^{\geq m} \g,
\end{equation}
\end{enumerate}
are equivalent to (right) pre-Lie algebra structures $\circ: \g
\otimes \g \rightarrow \g$, under the correspondence that the
restriction of $*$ to a morphism of $\mathcal{C}$,
\begin{equation} \label{catstar1}
*: \g \otimes \g \rightarrow \Sym \g,
\end{equation}
is the direct sum of the morphisms
\begin{equation}\label{catstar2}
\cdot: \g \otimes \g \rightarrow \Sym^2 \g, 
\quad \circ: \g \otimes \g \rightarrow \g \cong \Sym^1 \g,
\end{equation}
the first coming from the standard algebra structure of $\Sym \g$, and
the second from the pre-Lie structure on $\g$.
\end{thm}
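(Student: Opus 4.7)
My plan is to adapt the strategy of Theorem \ref{twstarprodthm} (equivalently Theorems \ref{starprodthm} and \ref{nctwstarprodthm}) to the categorical setting, with $\SS$-modules replaced by the symmetric sequences $\SS_{\mathcal{C}}$ valued in $\mathcal{C}$. The forward direction is formal: given $*$ satisfying (i)--(iii), I would define $\circ: \g \otimes \g \to \g$ as the composition of the restriction $* : \g \otimes \g \to \Sym \g$ with the projection $\pi_1: \Sym \g \to \g$ to degree one. The proof of the categorical analogue of Proposition \ref{expprop} uses only the coproduct $\Delta$, the filtration, the projections to graded pieces, and the hypotheses (i)--(iii), all of which are defined by morphisms in $\mathcal{C}$, so it transports verbatim. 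Formula (1) of that proposition then forces $\circ$ to satisfy the pre-Lie axiom as a morphism in $\mathcal{C}$, and formulas (2)--(3) show that $*$ is uniquely determined by $\circ$.

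The substantive direction is existence: given a pre-Lie object $(\g, \circ)$ in $\mathcal{C}$, one must build $*$. Following the strategy of \S \ref{gensuspsec}, I would pass to $\SS_{\mathcal{C}}$, which is again abelian symmetric monoidal with limits, form the suspension-with-parameter $\Sigma \g[t]$, upgrade $\circ$ to a twisted Lie bracket on $\Sigma \g[t]$ over $\kk[t]$ via the categorical analogue of Proposition \ref{twpltwlie}, and then invoke the categorical generalization of Stover's PBW theorem for connected twisted Lie algebras (Theorem \ref{catpbw}, sketched in \S \ref{catpbwsec}) to obtain unconditionally the graded PBW isomorphism $\Sym_{\SS_{\mathcal{C}}} \widehat \g \iso \gr U_{\SS_{\mathcal{C}}} \widehat \g$. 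The inductive construction of a twisted coalgebra isomorphism $\Phi: \Sym_{\SS_{\mathcal{C}}} \widehat \g \iso U_{\SS_{\mathcal{C}}} \widehat \g$ carried out in the proof of Theorem \ref{nctwstarprodthm} then produces a star product on $\Sym_{\SS_{\mathcal{C}}} \widehat \g$ satisfying the $\SS_{\mathcal{C}}$-analogues of (i)--(iv). Taking $S_m$-coinvariants in degree $m$ and setting $t=1$ finally specializes this to a star product on $\Sym_{\mathcal{C}} \g$ satisfying (i)--(iii), and the restriction formula \eqref{catstar1}--\eqref{catstar2} follows from the analogue of \eqref{nctwdeg2}.

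The main obstacle is that the proofs of Theorems \ref{stthm} and \ref{nctwstarprodthm} are written with explicit elements $x_1, \ldots, x_n$, whereas in an abstract $\mathcal{C}$ one must work at the level of morphisms. Concretely, the inductive formula \eqref{twphiconstr} and the well-definedness check \eqref{phiwelldef} have to be recast as equalities of morphisms out of $\g^{\otimes n}$, and the $S_n$-equivariance that lets one factor these through $\Sym^n \g$ must be encoded via the naturality of the braiding $\beta$ and the universal properties of $\Sym$ and $U$. Once this translation is performed, each inductive step is a diagram chase that follows formally from the pre-Lie axiom and the usual coassociativity and bialgebra identities, so the argument goes through unchanged. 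The abelian, limit, and symmetric monoidal hypotheses on $\mathcal{C}$ are used precisely to make sense of the filtrations, associated gradeds, subobjects, projections, and coinvariants appearing in the argument.
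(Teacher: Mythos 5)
Your proposal follows essentially the same route as the paper's sketch: first note that the direct categorical translation of the Theorem \ref{starprodthm} argument works when \eqref{pbwonto} is an isomorphism, and then, to get the result unconditionally, pass to $\SS_{\mathcal{C}}$, suspend to $\Sigma\g[t]$, apply the categorical analogues of Propositions \ref{pltwlie} and \ref{expprop} together with the categorical Stover PBW theorem (Theorem \ref{catpbw}.(iv)), construct $\Phi$ as in Theorem \ref{nctwstarprodthm}, and finally descend by taking $S_n$-coinvariants and performing the identifications generalizing $t=1$. The only cosmetic slip is the citation of Proposition \ref{twpltwlie} rather than Proposition \ref{pltwlie} at the suspension step — since the starting object $\g$ is in $\mathcal{C}$ (degree zero in $\SS_{\mathcal{C}}$), the relevant analogue is Proposition \ref{pltwlie}, though the two coincide here, so this does not affect the argument.
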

Moreover, Proposition \ref{expprop} has a straightforward categorical
generalization as follows.  Let $\mu_{\cdot}$ and $\mu_{*}$ denote the
usual and star-product multiplications $\Sym \g \otimes \Sym \g
\rightarrow \Sym \g$, and $\mu_\circ$ denote the pre-Lie
multiplication on $\g$, which we will extend formally to a binary
operation on $\Sym \g$ (which is not pre-Lie).  Let $\iota_r: X \iso X
\otimes I$ be the natural isomorphism which is part of the identity
structure of $\mathcal{C}$ (for all objects $X$ in $\mathcal{C}$).
Then, as in Proposition \ref{expprop}, the following formulas hold and
give an inductive (on degree of $\Sym \g$) way to define $\mu_*$
(while simultaneously extending $\mu_\circ$ to a binary operation on
all of $\Sym \g$):
\begin{enumerate}
\item[(0)] $\mu_\circ \iota_r = \Id$,
\item[(1)] $\mu_\circ (\Id \otimes \mu_\cdot)|_{\Sym \g \otimes \Sym
    \g \otimes \g} = \mu_\circ (\mu_\circ \otimes \Id - \Id \otimes
  \mu_\circ)|_{\Sym \g \otimes \Sym \g \otimes \g}$,
\item[(2)] $\mu_\circ (\mu_\cdot \otimes \Id) = \mu_\cdot (\mu_\circ
  \otimes \mu_\circ) (\Id \otimes \beta \otimes \Id) (\Id \otimes \Id
  \otimes \Delta)$,
\item[(3)] $\mu_* = \mu_\cdot (\mu_\circ \otimes \Id) (\Id \otimes \Delta)$.
\end{enumerate}
\begin{cor} \label{catplpbw} If $\g$ is a pre-Lie algebra (or
  associative algebra) in $\mathcal{C}$ and $U \g$ the universal
  enveloping algebra of the associated Lie algebra, then
\begin{enumerate}
\item The canonical morphism $\Sym \g \rightarrow \gr (U \g)$ is an
  isomorphism;
\item The canonical morphism lifts to a coalgebra isomorphism $\Sym \g
  \iso U \g$.
\end{enumerate}
\end{cor}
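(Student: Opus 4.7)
The plan is to deduce both parts of Corollary \ref{catplpbw} from Theorem \ref{catstarprodthm}, in close parallel with the $\kk$-module argument of \S \ref{starprodthmsec}. The point is that the star product produced by Theorem \ref{catstarprodthm} is essentially the enveloping algebra structure transported back to $\Sym \g$.

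Given a pre-Lie algebra $(\g, \circ)$ in $\mathcal{C}$, apply Theorem \ref{catstarprodthm} to obtain a star product $*$ making $(\Sym \g, *)$ a filtered bialgebra whose associated graded is the standard bialgebra $\Sym \g$. By the description \eqref{catstar1}--\eqref{catstar2}, the commutator $\mu_* - \mu_* \circ \beta$, restricted to $\g \otimes \g$, agrees with $\mu_\circ - \mu_\circ \circ \beta$, which is the Lie bracket $\{\,,\,\}$ of the pre-Lie structure. Hence the inclusion $\iota: \g \to (\Sym \g, *)$ is a Lie algebra morphism in $\mathcal{C}$, and by the universal property of the enveloping algebra it extends uniquely to an algebra morphism $\phi: U \g \to (\Sym \g, *)$. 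Since $\iota(\g)$ lands in the primitives on both sides and both bialgebras are generated by their primitives, $\phi$ is automatically a morphism of bialgebras in $\mathcal{C}$.

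Next, equip $U\g$ and $(\Sym \g, *)$ with their standard ascending filtrations, which are bialgebra filtrations by condition (ii) of Theorem \ref{catstarprodthm}; the morphism $\phi$ is then filtered, and taking associated gradeds yields an algebra morphism $\gr \phi: \gr U\g \to \gr(\Sym \g, *) = \Sym \g$. On the other hand, the canonical morphism $\psi: \Sym \g \to \gr U\g$ is also an algebra morphism, well-defined because $\gr U\g$ is commutative (the commutator in $U\g$ lowering filtration by one). Both $\gr\phi$ and $\psi$ restrict to $\Id_\g$, and $\Sym \g$ and $\gr U\g$ are each generated as algebras by $\g$, so the compositions $\gr\phi \circ \psi$ and $\psi \circ \gr\phi$ each equal the identity. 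Thus $\psi$ is an isomorphism, proving (i).

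For (ii), it suffices to show that $\phi$ itself is an isomorphism of filtered bialgebras, as then $\phi^{-1}$ is the desired coalgebra isomorphism $\Sym \g \iso U \g$ lifting $\psi$. Since $\gr\phi$ is already known to be an isomorphism, this follows by the standard filtration argument: the five lemma applied to the short exact sequences $0 \to F_{n-1} \to F_n \to \gr_n \to 0$ in $\mathcal{C}$ shows inductively that $\phi|_{F_n}$ is an isomorphism for every $n$, and passing to the exhaustive colimit of the filtration yields that $\phi$ is an isomorphism. The associative algebra case is then immediate, since every associative algebra is a right pre-Lie algebra. The step I expect to be the main obstacle is precisely this last five-lemma-plus-colimit argument in the abstract setting: in $\kk$-modules it is completely routine, but in the generality of an abelian symmetric monoidal category with limits one must check that the filtrations are suitably exhaustive and that the relevant filtered colimits exist and interact well with the exact sequences, which may warrant a mild strengthening of the limit hypothesis (e.g., well-behaved filtered colimits in $\mathcal{C}$).
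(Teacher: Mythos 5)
Your proof is correct. The paper does not spell out a proof of Corollary \ref{catplpbw}: the corollary is obtained as a byproduct of the sketched proof of Theorem \ref{catstarprodthm}, which passes to $\Sigma\g[t]$ in $\SS_{\mathcal{C}}$, invokes the categorical Stover graded PBW theorem (Theorem \ref{catpbw}.(iv)), and takes coinvariants to produce the coalgebra isomorphism $\Sym_{\mathcal C}\g \iso U_{\mathcal C}\g$ directly, with $*$ then defined by transport of structure. You instead treat Theorem \ref{catstarprodthm} as a black box: the universal property of $U\g$ applied to the star-product algebra gives a filtered bialgebra map $\phi: U\g \to (\Sym\g,*)$, and you identify $\gr\phi$ as a two-sided inverse to the canonical PBW morphism $\psi$ using that $\Sym\g$ is free commutative on $\g$ and $\gr U\g$ is generated in degree $1$, then promote $\phi$ to an isomorphism by the short five lemma on filtration pieces plus exhaustiveness. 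This is a valid alternative that extracts the corollary from the statement of the theorem rather than its proof. Two minor points. First, the justification that $\phi$ is a coalgebra map via ``generated by primitives'' is weaker than necessary (the primitives of $U\g$ or of $(\Sym\g,*)$ can strictly contain $\g$ over general $\kk$); the cleaner argument is that $\Delta_*\circ\phi$ and $(\phi\otimes\phi)\circ\Delta_{U}$ are both algebra morphisms $U\g\to(\Sym\g,*)\otimes(\Sym\g,*)$ that agree on $\g$, hence coincide by the universal property of $U\g$. Second, your worry about the final colimit step is reasonable but not specific to your argument: the paper already needs countable coproducts and well-behaved exhaustive filtrations just to form $\Sym\g$, $T\g$, and $U\g$ in $\mathcal{C}$, so no genuinely new hypothesis is being added.
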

\begin{proof}[Sketch of proof of Theorem \ref{catstarprodthm}]
  The proof of this theorem is a straightforward generalization of the
  proof of Theorem \ref{starprodthm} in the case when \eqref{pbwonto}
  is an isomorphism.  In more detail, we translate the formulas there
  into categorical terms, analogously to the way we translated
  \eqref{twdeg2} into \eqref{catstar1} and \eqref{catstar2}, and
  Proposition \ref{expprop}.(0)--(3) into the above.

  To avoid assuming that \eqref{pbwonto} is an isomorphism, we can
  prove a suspended version of the above.  This involves considering
  the category $\SS_{\mathcal{C}}$, whose objects are of the form
  $\bigoplus_{i \geq 0} X_i$, where $X_i$ are objects of $\mathcal{C}$
  equipped with an action by $S_i$ of automorphisms. For example, if
  $\mathcal{C}$ is the category of $\kk$-modules, then
  $\SS_{\mathcal{C}} = \SS$-modules, and if $\mathcal{C} =
  \SS$-modules, then $\SS_{\mathcal{C}} = \SS$-bimodules (as remarked
  in \S \ref{twgensubsec}). Analogously to the case of $\SS$-modules,
  we can endow $\SS_{\mathcal{C}}$ with the structure of symmetric
  monoidal category, given by \eqref{smodtpdefn} and \eqref{betadefn}
  verbatim, provided we understand the operation $\Ind_H^G(X)$ for
  finite groups $H < G$, and $X$ an object of $\mathcal{C}$ equipped
  with an action of $H$ by automorphisms. One way to construct this is
  to take $X^{\oplus |G|}$, labeling the copies of $X$ by the elements
  of $G$, and then quotient by setting the diagonal action of $H$ by
  automorphisms equal to the action of $H$ by permuting the factors of
  $X$ according to its action on $G$.

  Then, we can form the object $\Sigma \g[t]$ categorically. If $\g$
  is an object of $\mathcal{C}$, we form the object $\Sigma \g =
  \bigoplus_{i \geq 0} X_i$ where $X_1 = \g$ and all other $X_i = 0$
  (equipped with the trivial action of $S_1$). We can also form
  $\Sigma \g[t] = \bigoplus_{i \geq 0} Y_i$ where $Y_i =
  \bigoplus_{j=1}^i I^{\otimes j-1} \otimes \g \otimes I^{\otimes i-j}
  \cong \g^{\oplus i}$, equipped with the action of $S_i$ by
  permutation of components.

  Now, $\Sigma \g[t]$ is not merely an object of $\SS_{\mathcal{C}}$,
  but a module over the algebra $P := \Sym (\Sigma I)$, which replaces
  the polynomial algebra $\kk[t]$ ($\Sigma I$ replaces $\langle t
  \rangle$).

  A straightforward categorical generalization of Proposition
  \ref{pltwlie} then shows that pre-Lie structures on objects $\g$ of
  $\mathcal{C}$ are equivalent to Lie structures on $\Sigma \g[t]$ in
  $\SS_{\mathcal{C}}$ compatible with the $P$-algebra structure (with
  $P$ as in the preceding paragraph).  The proof, as in the case of
  Proposition \ref{pltwlie}, is just a matter of checking the
  definitions, and expanding the Jacobi identity in
  $\SS_{\mathcal{C}}$ into the components of
\begin{equation}
  (\Sigma \g[t])_3 = (\g \otimes I \otimes I) \oplus 
  (I \otimes \g \otimes I) \oplus (I \otimes I \otimes \g).
\end{equation}
Now, as we will outline in Section \ref{catpbwsec}, Stover's graded
PBW theorem for connected Lie algebras extends to the categorical
setting of $\SS_{\mathcal{C}}$.  This allows us to prove that
$\Sym_{\SS_{\mathcal{C}}, P} (\Sigma \g[t]) \iso U_{\SS_{\mathcal{C}},
  P}(\Sigma \g[t])$ via a star-product on the former symmetric
algebra. Finally, we can take $S_n$-coinvariants in each degree $n$,
and perform the identifications $I^{\otimes j-1} \otimes \g \otimes
I^{\otimes i-j} \cong \g$ which generalize setting $t=1$. We conclude
that $\Sym_{\mathcal{C}} \g \iso U_{\mathcal{C}} \g$ via the star
product $*$ asserted in the theorem.
\end{proof}
\begin{rem} \label{commongenrem} There is a common generalization of
  Theorems \ref{catstarprodthm} and \ref{nctwstarprodthm}, which
  extends the suspension functor to act from $\SS_{\mathcal{C}}$ to
  itself, taking pre-Lie algebras to pre-Lie algebras, and such that
  pre-Lie algebra structures on $\g \in \SS_{\mathcal{C}}$ are
  equivalent to certain star products on $\Sym_{\SS_{\mathcal{C}}, P}
  \Sigma \g[t]$ (with notations as in the proof above). We omit the
  details.
\end{rem}

\section{Graded PBW theorems in a unified categorical
  context}
\label{catpbwsec} In this section, we provide a simple proof of the
twisted and non-twisted graded PBW theorems in a more general
categorical context.  We also generalize Stover's graded PBW theorem
\cite{St} for connected twisted Lie algebras from the setting of
$\kk$-modules to that of an arbitrary symmetric monoidal category
(Theorem \ref{catpbw}.(iv)).

We will use the setup of \S \ref{catgensec}.  In this section, we
address the question: when is \eqref{pbwonto} an isomorphism?  To
answer this, let $J \subset T \g$ be the kernel of the projection $T
\g \onto U \g$.  For all $n$, let $J_{\leq n}$ be the subobject of
$T^{\leq n} \g$ given by
\begin{gather}
  J_{2} := (\Id - \beta - \{\,,\}) (T^2 \g) \subset T^{\leq 2} \g, \\
  J_{\leq n} := \sum_{i+j + 2 \leq n} T^i \g \otimes J_2 \otimes T^j
  \g, \quad \forall n \geq 3.
\end{gather}
Then, it is evident that $J = \lim_{n \rightarrow \infty} J_{\leq n}$.

The main technical result we need is
\begin{lemma}
  The formula
\begin{equation} \label{genpbwact}
(i,i+1) \cdot_{\{\,,\}} := \beta^{i,i+1} + \{\,,\}^{i,i+1}
\end{equation}
defines an action of $S_n$ on $T^n \g \oplus (T^{\leq n-1} \g /
J_{\leq n-1} \g)$ for all $n$, acting trivially on the second factor,
so that taking coinvariants yields $T^{\leq n} \g / J_{\leq n} \g$.
\end{lemma}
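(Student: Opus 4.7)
The plan is to define simple transposition operators $s_i := (i,i+1) \cdot_{\{,\}}$ via \eqref{genpbwact} and verify the Coxeter relations, after which the coinvariants formula will be an essentially formal consequence. Since each $s_i$ acts trivially on the lower summand $T^{\leq n-1}\g/J_{\leq n-1}\g$, it suffices to verify each relation on $v \in T^n\g$. The involution $s_i^2 = \mathrm{Id}$ is quick: expanding gives $s_i^2 v = \beta^2(v) + \{,\}^{i,i+1}\beta^{i,i+1}(v) + \{,\}^{i,i+1}(v) = v$, using $\beta \circ \beta = \mathrm{Id}$ in a symmetric monoidal category and the Lie antisymmetry $\{,\} \circ \beta = -\{,\}$, together with triviality of $s_i$ on the lower summand (which makes the image term $\{,\}^{i,i+1}v \in T^{n-1}\g$ survive unchanged).

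For the commutation $s_i s_j = s_j s_i$ when $|i-j| \geq 2$, the $T^n\g$-contributions on both sides agree because $\beta^{i,i+1}$ and $\beta^{j,j+1}$ act on disjoint positions. The remaining discrepancy in $T^{n-1}\g$ takes the form $\{,\}^{i,i+1}(\beta^{j,j+1} - \mathrm{Id})v - \{,\}^{j,j+1}(\beta^{i,i+1} - \mathrm{Id})v$. Since the pair $(j,j+1)$ lies inside a single $T^2\g$-factor of the tensor decomposition, the local relation $(\mathrm{Id} - \beta - \{,\})(u) \in J_2$ yields $(\beta^{j,j+1} - \mathrm{Id})v \equiv -\{,\}^{j,j+1}v \pmod{J_{\leq n}}$, and composing with $\{,\}^{i,i+1}$ (which carries $T^p\g \otimes J_2 \otimes T^q\g$ into $J_{\leq n-1}$ when $(i,i+1)$ is disjoint from the $J_2$-factor) reduces the discrepancy to $-\{,\}^{i,i+1}\{,\}^{j,j+1}v + \{,\}^{j,j+1}\{,\}^{i,i+1}v$ modulo $J_{\leq n-1}$, which vanishes since disjoint brackets commute.

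The main obstacle is the braid relation $s_i s_{i+1} s_i = s_{i+1} s_i s_{i+1}$, which is where the Jacobi identity must enter. Reducing to the case $i=1$, $n=3$ (the general case is identical up to relabeling), the $T^3\g$-contributions agree by the braid identity for $\beta$ in $\mathcal{C}$. Expanding the $T^2\g$-parts gives a six-term discrepancy that organizes as three expressions of the form $u_k - \beta(u_k)$ with $u_k \in T^2\g$, each corresponding to one of the three ways of bracketing a pair among the three tensor factors. Applying $u - \beta(u) \equiv \{,\}(u) \pmod{J_2}$ and invoking Lie antisymmetry to rewrite the three terms into cyclic form, the discrepancy becomes exactly the Jacobiator $\{\{x_1,x_2\},x_3\} + \{\{x_2,x_3\},x_1\} + \{\{x_3,x_1\},x_2\}$, which vanishes.

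Finally, the coinvariants statement is a formal consequence. Via the natural identification
\begin{equation*}
T^n\g \oplus (T^{\leq n-1}\g/J_{\leq n-1}\g) \cong T^{\leq n}\g/J_{\leq n-1}\g,
\end{equation*}
the relation $v - s_i(v) = (\mathrm{Id} - \beta - \{,\})^{i,i+1}(v)$ for $v \in T^n\g$ corresponds to a generator of the image of $T^{i-1}\g \otimes J_2 \otimes T^{n-i-1}\g$ in $T^{\leq n}\g/J_{\leq n-1}\g$. As $i$ and $v$ vary, these span exactly $J_{\leq n}/J_{\leq n-1}$, so passing to $S_n$-coinvariants produces $T^{\leq n}\g/J_{\leq n}\g$ as claimed.
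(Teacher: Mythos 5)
Your proof is correct and follows the same strategy as the paper: check the Coxeter relations (involution from skew-symmetry, braid from Jacobi, commutation for disjoint transpositions) for the operators $s_i$, and then identify the $S_n$-coinvariants with $T^{\leq n}\g/J_{\leq n}\g$ by observing that the relations $v - s_i v = (\Id - \beta - \{\,,\})^{i,i+1}v$ generate exactly $J_{\leq n}/J_{\leq n-1}$. Your treatment of the disjoint commutation relation is in fact more careful than the paper's, which dismisses it as following ``by definition'': as you note, the mixed $\beta$-and-bracket discrepancy $\{\,,\}^{i,i+1}(\beta^{j,j+1}-\Id)v - \{\,,\}^{j,j+1}(\beta^{i,i+1}-\Id)v$ only vanishes after invoking the $J_2$-relation and the fact that disjoint brackets commute modulo $J_{\leq n-1}$.
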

\begin{proof}
  We need to check the following identities:
\begin{gather}
  (i,i+1)^2 \cdot_{\{\,,\}} = \Id, \label{sqid} \\
  (i,i+1) (i+1, i+2) (i, i+1) \cdot_{\{\,,\}} = (i+1, i+2) (i, i+1) 
  (i+1, i+2) \cdot_{\{\,,\}}, \label{brid} \\
  (i,i+1)(j,j+1) \cdot_{\{\,,\}} = (j,j+1)(i,i+1)
  \cdot_{\{\,,\}}.  \label{comid}
\end{gather}
The first identity \eqref{sqid} follows from skew-symmetry of
$\{\,,\}$.  The braid relation \eqref{brid} follows from the Jacobi
identity.  Finally, \eqref{comid} follows by definition.
\end{proof}
Now, there is a natural exact sequence of $S_n$-modules
\begin{equation} \label{pbwses} 0 \rightarrow T^{\leq n-1} \g /
  J_{\leq n-1} \g \rightarrow (T^n \g \oplus (T^{\leq n-1} \g /
  J_{\leq n-1} \g), \cdot_{\{\,,\}}) \rightarrow T^n \g \rightarrow 0,
\end{equation}
where, on the left, $T^{\leq n-1} \g / J_{\leq n-1} \g$ is given the
trivial action, and on the right, $T^n \g$ is given the standard
$S_n$-action. We deduce
\begin{prop} \label{catpbwprop} The graded PBW map \eqref{pbwonto} is
  an isomorphism if and only if, for all $n$, the sequence
  \eqref{pbwses} remains exact after taking $S_n$-coinvariants.
\end{prop}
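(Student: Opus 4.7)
The plan is to apply the $S_n$-coinvariants functor to the short exact sequence \eqref{pbwses} and identify the resulting right-exact sequence with one whose injectivity on the left encodes exactly the graded PBW isomorphism in degree $n$.

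First I would identify the coinvariants of the three terms. The left term $T^{\leq n-1}\g/J_{\leq n-1}\g$ carries the trivial $S_n$-action, so its coinvariants are itself, which by construction is $F_{n-1}U\g$. The right term $T^n\g$ carries the standard braiding $S_n$-action, so its coinvariants are $\Sym^n\g$ by definition. The middle term, by the preceding lemma, has coinvariants canonically identified with $T^{\leq n}\g/J_{\leq n}\g = F_n U\g$. Since $S_n$-coinvariants in an abelian category is computed as a coequalizer (of the action maps with the identity), it is a finite colimit and therefore right-exact; applying it to \eqref{pbwses} yields the right-exact sequence
\[
F_{n-1}U\g \longrightarrow F_n U\g \longrightarrow \Sym^n\g \longrightarrow 0.
\]

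Consequently $\Sym^n\g$ is the cokernel of the map $F_{n-1}U\g \to F_n U\g$, whereas by definition $\gr^n U\g = F_n U\g/F_{n-1}U\g$, where $F_{n-1}U\g$ is regarded as its image subobject in $F_n U\g$. The $n$-th graded piece of \eqref{pbwonto} is exactly the natural surjection $\Sym^n\g \onto \gr^n U\g$, and it is an isomorphism precisely when the image of $F_{n-1}U\g$ in $F_n U\g$ coincides with $F_{n-1}U\g$ itself, equivalently when the first arrow above is a monomorphism, i.e., when \eqref{pbwses} remains short exact after $S_n$-coinvariants.

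Quantifying this equivalence over all $n$ yields the proposition in both directions. The main obstacle was already dispatched in the preceding lemma, namely the identification of the coinvariants of the middle term with $F_n U\g$; the remainder is a formal chase of a right-exact sequence of coinvariants, together with the observation that an abelian category with limits used in the paper also possesses the finite colimits required to form coequalizers.
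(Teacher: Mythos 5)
Your argument is not circular on its face, but it rests on an identification that is not ``by construction'' and is in fact equivalent to what you are trying to prove. You write that $T^{\leq n-1}\g/J_{\leq n-1}\g$ ``by construction is $F_{n-1}U\g$'' and that the middle coinvariants are ``$T^{\leq n}\g/J_{\leq n}\g = F_n U\g$.'' But by the paper's definition, $F_n U\g$ is the \emph{image} of $T^{\leq n}\g$ in $U\g = T\g/J$, i.e.\ $T^{\leq n}\g/(J \cap T^{\leq n}\g)$. One only has $J_{\leq n} \subseteq J \cap T^{\leq n}\g$; the reverse inclusion can fail (this is exactly what goes wrong in Cohn's counterexample in the appendix), so there is merely a natural epimorphism $T^{\leq n}\g/J_{\leq n}\g \onto F_n U\g$. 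Asserting this epimorphism is an isomorphism for all $n$ is equivalent to the injectivity of the transition maps you are trying to characterize, so the argument as written begs the question. This also undermines the subsequent step: once you call the coinvariant objects $F_{n-1}U\g$ and $F_n U\g$, the map between them is literally the filtration inclusion and is \emph{automatically} monic, so under your own identifications the sequence would always stay exact and nothing would be left to prove.

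The fix is to keep the notation $Q_n := T^{\leq n}\g/J_{\leq n}\g$ distinct from $F_n U\g$. Coinvariants of \eqref{pbwses} give the right-exact sequence $Q_{n-1} \to Q_n \to \Sym^n\g \to 0$, and one has compatible epimorphisms $Q_n \onto F_n U\g$. For the ``if'' direction: if all transitions $Q_{n-1} \to Q_n$ are monic, then since $U\g = \colim_n Q_n$ is a filtered colimit along monomorphisms, each $Q_n$ injects into $U\g$, whence $Q_n \iso F_n U\g$, and then $\gr^n U\g = Q_n/Q_{n-1} \cong \Sym^n\g$. For the ``only if'' direction one proceeds by induction on $n$: assuming $Q_{n-1} \iso F_{n-1}U\g$, compare the right-exact top row $Q_{n-1} \to Q_n \to \Sym^n\g \to 0$ with the short exact bottom row $0 \to F_{n-1}U\g \to F_n U\g \to \gr^n U\g \to 0$; the outer verticals are isomorphisms (the right one by PBW in degree $n$) and the middle vertical is epi, so a diagram chase shows the middle vertical is an isomorphism and the top row is short exact, closing the induction. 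Your proof has the right skeleton, but it needs this induction and the careful distinction between $Q_n$ and $F_n U\g$ to actually go through.
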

For brevity, say that \emph{the graded PBW theorem holds} for $\g$ if
\eqref{pbwonto} is an isomorphism.
\begin{thm} \label{catpbw}
\begin{enumerate}
\item[(i)] If $\mathcal{C}$ is enriched over $\QQ$-vector spaces, then
  the graded PBW theorem holds.\footnote{This was noticed in
    \cite[Theorem A.9]{Frcogp}; as explained there, it can also be
    proved in a usual manner.}
\item[(ii)] (Usual graded PBW theorem): If $\mathcal{C}$ is the
  category of $\kk$-modules, and $\g$ is a Lie algebra in
  $\mathcal{C}$ which is free as a $\kk$-module, then the graded PBW
  theorem holds if and only if $\{x,x\} = 0$ for all $x \in \g$.
\item[(iii)] (Twisted graded PBW theorem \cite{St}): If $\mathcal{C}$
  is the category of $\SS$-modules, and $\g$ is a connected Lie
  algebra in $\mathcal{C}$, then the graded PBW theorem holds.  More
  generally, if $\g_0$ is a free $\kk$-module, the graded PBW theorem
  holds if and only if $\{x,x\} = 0$ for all $x \in \g_0$.
\item[(iv)] (Categorical version of (iii)): If $\mathcal{C}$ is
  arbitrary and $\SS_{\mathcal{C}}$ is the category of symmetric
  sequences of objects of $\mathcal{C}$ (cf.~\S \ref{catgensec}), and
  $\g$ is a connected Lie algebra in $\SS_{\mathcal{C}}$, then the
  graded PBW theorem holds.
\item[(v)] (Pre-Lie PBW theorem): For arbitrary $\mathcal{C}$, if $\g$
  is the associated Lie algebra of a pre-Lie algebra in $\mathcal{C}$,
  then the graded PBW theorem holds.
\end{enumerate}
\end{thm}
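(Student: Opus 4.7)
The plan is to apply Proposition \ref{catpbwprop} throughout, reducing each of the five parts to the exactness of the sequence \eqref{pbwses} after $S_n$-coinvariants. Equivalently, using the long exact sequence in group homology attached to \eqref{pbwses}, I must show that the connecting map $H_1(S_n, T^n \g) \to T^{\leq n-1}\g / J_{\leq n-1}$ vanishes in each setting. A different ingredient handles each case: a rationality argument for (i), an explicit transversal for (ii), an $S_n$-freeness argument for (iii) and (iv), and a reduction to (iv) via suspension for (v).

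For part (i), if $\mathcal{C}$ is enriched over $\QQ$-vector spaces, the averaging idempotent $\frac{1}{n!}\sum_{\sigma \in S_n} \sigma$ gives a natural splitting of $X \to X_{S_n}$ for every object $X$ carrying an $S_n$-action, so $S_n$-coinvariants is exact and the connecting map vanishes automatically. For part (ii), I choose an ordered $\kk$-basis of $\g$; the ordered tensors furnish a canonical set of $S_n$-orbit representatives in $T^n \g$, and the axiom $\{x,x\}=0$ together with skew-symmetry and the Jacobi identity yields an explicit $S_n$-equivariant splitting of \eqref{pbwses}, exactly as in the classical diamond-lemma argument of \cite{HigginsPBW}.

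Parts (iii) and (iv) are the main substance, and contain the step I expect to be the main obstacle. The key technical claim is that, for a connected Lie algebra $\g$ in $\SS_{\mathcal{C}}$, the $n$-th tensor power $T^n \g$ is a \emph{free} $S_n$-object (of the form $\Ind_{\{1\}}^{S_n} M$ for some $M$) with respect to the permutation of tensor factors. Granted this, the middle term of \eqref{pbwses} is an extension of an $S_n$-free object by an $S_n$-trivial object, and remains short-exact under coinvariants because $H_1(S_n, -)$ vanishes on $S_n$-free objects. To prove the freeness I expand $(T^n \g)_m$ as a sum, over compositions $(i_1, \ldots, i_n)$ with each $i_j \geq 1$ (automatic for connected $\g$), of induced objects $\Ind_{S_{i_1} \times \cdots \times S_{i_n}}^{S_m}(\g_{i_1} \otimes \cdots \otimes \g_{i_n})$, organize these summands by $S_n$-orbits on the set of compositions, and verify that the isotropy under the external $S_n$-action is absorbed into the internal $S_m$-action inside each induced object. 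Setting up this bookkeeping in $\SS_{\mathcal{C}}$ is the hardest step, and relies on the categorical definition of $\Ind$ outlined in the proof of Theorem \ref{catstarprodthm}. For the second assertion of (iii), where $\g_0$ is a free $\kk$-module but $\g$ need not be connected, I combine the freeness argument on $\bigoplus_{m \geq 1} \g_m$ with the argument of (ii) for the degree-zero contribution, using $\{x,x\}=0$ on $\g_0$.

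Finally, part (v) follows from (iv) by the suspension construction. Given a pre-Lie structure on $\g$ in $\mathcal{C}$, the categorical generalization of Proposition \ref{pltwlie} sketched in the proof of Theorem \ref{catstarprodthm} shows that $\Sigma \g[t]$ is a connected Lie algebra in $\SS_{\mathcal{C}}$ over the polynomial-type algebra object $P = \Sym(\Sigma I)$. Applying (iv) to $\Sigma \g[t]$ yields a graded PBW isomorphism in $\SS_{\mathcal{C}}$, which after taking $S_n$-coinvariants in each $\SS$-degree and setting $t = 1$, exactly as in the deduction of Theorem \ref{starprodthm} from Theorem \ref{ncstarprodthm}, recovers $\Sym_{\mathcal{C}} \g \iso \gr U_{\mathcal{C}} \g$.
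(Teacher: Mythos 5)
For parts (i)--(iv) your proposal matches the paper's argument essentially step for step (semisimplicity of $\QQ[S_n]$, ordered-tensor orbit representatives with $\{x,x\}=0$, and the $S_n$-freeness of $T^n\g$ when $\g$ is connected). Your phrasing of (iii)/(iv) in terms of vanishing of $H_1(S_n, -)$ on free $S_n$-objects is cosmetically different from the paper's explicit splitting of \eqref{pbwses}, but these are interchangeable since freeness gives projectivity, and either version proves exactness of coinvariants.

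Part (v), however, has a genuine gap, and it is precisely the one flagged in the Caution following Theorem \ref{stthm}. You apply (iv) to $\Sigma\g[t]$ to obtain a \emph{graded} PBW isomorphism $\Sym_{\SS_{\mathcal{C}}}\Sigma\g[t] \iso \gr U_{\SS_{\mathcal{C}}}\Sigma\g[t]$, and then propose to take $S_n$-coinvariants in each degree and set $t=1$. But coinvariants do not, in general, commute with taking associated graded, so the object you land on is $(\gr U_{\SS_{\mathcal{C}}}\Sigma\g[t])_{\SS}|_{t=1}$, which is not a priori the same as $\gr\bigl((U_{\SS_{\mathcal{C}}}\Sigma\g[t])_{\SS}\bigr)|_{t=1} = \gr U_{\mathcal{C}}\g$. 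The paper makes this point explicitly, and moreover notes via Remark \ref{stcoalgrem} that for a general connected twisted Lie algebra, the graded isomorphism need not lift $\SS$-equivariantly, so the naive exchange really can fail. Your appeal to ``exactly as in the deduction of Theorem \ref{starprodthm} from Theorem \ref{ncstarprodthm}'' is not parallel: that deduction starts from the \emph{filtered, unquotiented} coalgebra isomorphism $\Phi: T_\kk\widehat\g \iso U_\SS\widehat\g$ (Theorem \ref{ncstarprodthm}), not from its associated graded, and it is this filtered map that survives coinvariants and $t=1$. To close the gap you must first use the \emph{pre-Lie} structure (not merely the Lie structure) to lift the graded isomorphism from (iv) to a filtered coalgebra isomorphism $\Sym_{\SS_{\mathcal{C}},P}(\Sigma\g[t]) \iso U_{\SS_{\mathcal{C}},P}(\Sigma\g[t])$, via the inductive star-product construction of Theorem \ref{catstarprodthm} (equivalently, its sketch leading to Corollary \ref{catplpbw}), and only then take coinvariants and set $t=1$. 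This is why the paper proves (v) by pointing to Corollary \ref{catplpbw}.(i), which is downstream of the quantized Theorem \ref{catstarprodthm}, rather than by a direct reduction to (iv).
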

Note that (iii) is a generalization of (ii).  Also, by a similar
argument, one can replace ``free'' in (ii) and (iii) by the condition
of being projective or a direct sum of cyclic modules.
\begin{proof}
  (i) If $\mathcal{C}$ is enriched over $\QQ$-vector spaces, then all
  $S_n$ actions are actually $\QQ[S_n]$ actions, and since $\QQ[S_n]$
  is semisimple, taking $S_n$-coinvariants is exact.

  (ii) In this case, as an $S_n$-module, $T^n \g$ is a direct sum of
  modules $\Ind_{S_{p_1} \times \cdots \times S_{p_r}}^{S_n} M$, where
  $M$ is a free $\kk$-module of rank one spanned by an element of the
  form $x_{i_1}^{\otimes p_1} \otimes \cdots \otimes x_{i_r}^{\otimes
    p_r}$, for $(x_i)$ a fixed $\kk$-basis of $\g$, and $i_1 \leq i_2
  \leq \cdots \leq i_r$.  Thus, there is a splitting of the surjection
  in \eqref{pbwses} if $\{x,x\} = 0$ for all $x$. Conversely, if
  $\{x,x\} \neq 0$ for some $x$, then the element $\{x,x\}$ is already
  in the kernel of $\g \onto \gr_1 U \g$.

  (iii) By definition, for all $n, m$, $T^n (\g_{> 0})_m \cong
  \bigoplus_{m_1 + \cdots + m_n = m} \Ind^{S_m}_{S_{m_1} \times \cdots
    \times S_{m_n}} (\g_{m_1} \otimes \cdots \otimes \g_{m_n})$.
  Here, $m_i > 0$ for all $i$.  Hence, $S_n$ acts freely on the left
  cosets $S_m / (S_{m_1} \times \cdots \times S_{m_n})$.  Let
  $\mathcal{K}_{m,n}$ be a set of representatives for these cosets.
  Thus, $T^n (\g_{> 0})_m$ is a direct sum of induced modules
  $\Ind_{\{1\}}^{S_n} \bigl(\sigma (\g_{m_1} \otimes \cdots \otimes
  \g_{m_n})\bigr)$ for $\sigma \in \mathcal{K}_{m,n}$, and $m_1 \leq
  \cdots \leq m_n$.  Hence, an $S_n$-module splitting of the
  surjection in \eqref{pbwses} in degree $m$ can be obtained from
  $S_{m_1} \times \cdots \times S_{m_n}$-module splittings restricted
  to $\g_{m_1} \times \cdots \times \g_{m_n}$ for all $m_1, \ldots,
  m_n$. This proves (iii) in the case that $\g$ is connected.  The
  general case is a combination of this argument with that of (ii).

  (iv) Just as in (iii), $T^n (\g_{>0})_m$ is a direct sum of induced
  modules $\Ind_{\{1\}}^{S_n} \bigl(\sigma (\g_{m_1} \otimes \cdots
  \otimes \g_{m_n})\bigr)$ for $\sigma \in \mathcal{K}_{m,n}$, and
  $m_1 \leq \cdots \leq m_n$, where induction is defined in \S
  \ref{catgensec}.  Hence, one obtains $S_n$-module splittings of the
  surjection of \eqref{pbwses} in degree $m$ as in (iii).

  (v) This is Corollary \ref{catplpbw}.(i).
%
\end{proof}
\begin{rem} In cases (i), (ii), and (v) of the theorem, in fact the
  quantum PBW theorem holds: one may lift the PBW isomorphism $\Sym \g
  \iso \gr(U \g)$ to a \emph{coalgebra isomorphism} $\Sym \g \iso U
  \g$.
  This is \emph{not} always true in cases (iii) and (iv), by Remark
  \ref{stcoalgrem}.  However, note that, as in \cite{St} or the proof
  of (iii), one can at least lift \eqref{pbwonto} to an isomorphism of
  $\kk$-modules, $\Sym_{\SS} \g \iso U_{\SS} \g$, and in the case of
  (iv), to an isomorphism in the category $\mathcal{C}$,
  $\Sym_{\SS_{\mathcal{C}}} \g \iso U_{\SS_{\mathcal{C}}} \g$ (just
  not necessarily in the category $\SS_{\mathcal{C}}$).
\end{rem}
\begin{rem} \label{extracondrem} In other symmetric monoidal
  categories, one can always find extra conditions on the bracket
  $\{\,,\}$ so that the graded PBW theorem still holds.  In general,
  the graded PBW theorem holds if and only if, whenever a sum of terms
  of the form
\begin{equation}
a \otimes (x \otimes y - \beta(x \otimes y)) \otimes b
\end{equation}
is zero, for $x,y \in \g$ and $a, b \in T \g$, then also the
corresponding sum of terms
\begin{equation}
a \otimes \{x, y\} \otimes b
\end{equation}
is zero. For this to be valid for a symmetric monoidal category where
elements of $\g$ don't exist, we replace the above by the condition
that, for all $n \geq 2$, the kernel of the map $(\g^{\otimes
  n})^{\oplus (n-1)} \rightarrow \g^{\otimes n}$ given by $\bigoplus_i
(\Id - \beta)^{i,i+1}$ injects into the kernel of the map
$(\g^{\otimes n})^{\oplus (n-1)} \rightarrow \g^{\otimes (n-1)}$ given
by $\bigoplus_i \{\,,\}^{i,i+1}$.  The case $n=2$ of this is the
condition that
\begin{equation}
  \ker(\beta - \Id) \subseteq \ker(\{\,,\}), \quad \text{in $\g^{\otimes 2}$.}
\end{equation}
This may be viewed as the generalized alternating condition.  This is
not enough in some cases, e.g., in the case of Lie superalgebras (see
Remark \ref{grcatpbwexam}).
\end{rem}
\begin{rem} There are various generalizations of the graded PBW
  theorem to the case of quotients of $T \g$ by ideals that resemble
  $J$ above, such as quantized enveloping algebras: see, e.g.,
  \cite{Berqpbw}. These should also have categorical generalizations,
  which one should be able to prove by modifying the above approach.
\end{rem}
\appendix
\section{PBW counterexamples and pre-Lie identities}
Here, we recall the example of \cite[\S 5]{CohnPBW} where the graded
PBW theorem does not hold, i.e., $\Sym \g \onto \gr U \g$ is not
injective, exploiting a classical $p$-th power identity of
Zassenhaus. We remark that the pre-Lie graded PBW theorem (Corollary
\ref{plpbw}.(i)) implies that the identity must lift to the pre-Lie
setting, which explains such an identity observed in
\cite{Toudlco}.\footnote{I am grateful to J.-L. Loday, whose question
  sparked this appendix.}

Zassenhaus observed in \cite{ZassLR} that there exists a Lie
polynomial $\Lambda_p(x,y)$ such that
\begin{equation} \label{zassid}
(x+y)^p - x^p - y^p \equiv \Lambda_p(x,y) \pmod p.
\end{equation}
In \cite{CohnPBW}, this identity was exploited to give an example
where the PBW map $\Sym \g \onto \gr U \g$ fails to be an isomorphism.
Let $\FF$ be a field of characteristic $p > 0$, and let $\kk =
\FF[\alpha, \beta, \gamma]/(\alpha^p, \beta^p, \gamma^p)$. Let $L$ be
the $\kk$-module presented as $L = \Span(x,y,z)/(\alpha x = \beta y +
\gamma z)$.  Let $\g$ be the free Lie algebra over $\kk$ generated by
$L$.\footnote{Note that $\g$ is in fact graded and connected.}

Then, it is evident that $\Lambda_p(\beta y,\gamma z) \neq 0$, since
$\Lambda$ is a Lie polynomial of degree $p$, and so the relations
$\alpha^p = \beta^p = \gamma^p = 0$ cannot affect the expansion, as
iterated brackets of $p$ copies of the same element yields zero.  On
the other hand, \eqref{zassid} together with the fact that $U \g = T
L$ shows that $\Lambda_p(\beta y, \gamma z) = 0$ in $U \g$, and hence
also in $\gr U \g$. So $\g \rightarrow U \g$ is not injective, and
neither is the canonical morphism $\Sym \g \rightarrow \gr U \g$.

Now, if we consider instead of $\g$ the free pre-Lie algebra generated
by $L$, which we denote by $\widetilde \g$, it follows from the
inclusion $\g \to \widetilde \g$ that $\Lambda_p(\beta y, \gamma z) =
0$ in $U \widetilde \g$.  Since the graded PBW theorem must hold for
all pre-Lie algebras (Corollary \ref{plpbw}.(i)), we deduce that
$\Lambda_p(x,y)$ must be in the linear span of all compositions of $x$
with itself $p$ times, of $y$ with itself $p$ times, and of $x+y$ with
itself $p$ times. Indeed, as observed in \cite[(7-8)]{Toudlco}, one
has
\begin{equation}\label{touid}
(x+y)^{\circ p} - x^{\circ p} - y^{\circ p} \equiv \Lambda_p(x,y) \pmod p,
\end{equation}
where $y \circ^i x := (\cdots ((y \circ x) \circ x) \cdots \circ x)$
is the $i$-th power of the right action of $x$ on $y$ by $\circ$, and
$x^{\circ i} := x \circ^{i-1} x$.
\begin{rem}
  The identity \eqref{touid} together with the fact that $\circ$ is a
  right Lie action (this is equivalent to the definition of a pre-Lie
  algebra) and the relation $\ad(x^p) \equiv \ad^p x \pmod p$ for
  associative algebras in characteristic p yields the identities
  (where $\ad(x) y := [x,y]$):
\begin{gather}
  (\ad^p(x+y) - \ad^p(x) -\ad^p(y)) z \equiv 
  \ad((x+y)^{\circ p} - x^{\circ p} - y^{\circ p}) z \pmod p, \label{adid} \\
  z \circ^{p} (x+y) - z \circ^p x - z \circ^p y \equiv z \circ
  (x+y)^{\circ p} - z \circ x^{\circ p} - z \circ y^{\circ p} \pmod p.
\end{gather}
However, as pointed out in \cite[\S 7]{Toudlco} (and is easy to
check), the operation $x \mapsto x^{\circ p}$ fails to yield a
restricted Lie algebra structure, i.e., $\ad^p(x) z \not \equiv
\ad(x^{\circ p}) z \pmod p$, in general. Similarly, $z \circ^p x \not
\equiv z \circ x^{\circ p} \pmod p$, in general.
\end{rem}
This leads one to ask: is there an example of a restricted Lie
algebra, or more generally, a Lie algebra $\g$ over a
characteristic-$p$ base ring $\kk$ with a $p$-th power operation
satisfying \eqref{touid}, \eqref{adid}, and $(\alpha x)^{\circ p} =
\alpha^p x^{\circ p}$ (for all $\alpha \in \kk$ and $x \in \g$), such
that the (nonrestricted) PBW morphism $\Sym \g \onto \gr U \g$ fails
to be injective?  
\bibliographystyle{amsalpha} 
\bibliography{prelie}
\end{document}